\definecolor{linkred}{rgb}{0.6,0,0}
\definecolor{linkblue}{rgb}{0,0,0.6}
\theoremstyle{plain}
\newtheorem{theorem}{Theorem}
\newtheorem{lemma}{Lemma}
\newtheorem{corollary}[theorem]{Corollary}
\theoremstyle{definition}
\newcommand{\bc}{\mathbb{C}}
\newcommand{\bp}{\mathbb{P}}
\newcommand{\bq}{\mathbb{Q}}
\newcommand{\bz}{\mathbb{Z}}
\newcommand{\modm}{\mathcal{M}}
\newcommand{\res}[1]{\begin{array}[d]{l}\\{\rm Res}\\^{#1}\end{array}\hspace{-1mm}}
\newtheorem{defn}{Definition}
\begin{document}

\title{Stationary Gromov-Witten invariants of projective spaces}
\author{Paul Norbury}
\address{Department of Mathematics and Statistics, The University of Melbourne, Victoria 3010, Australia}
\email{\href{mailto:norbury@unimelb.edu.au}{norbury@unimelb.edu.au}}

\thanks{This work was supported by the Australian Research Council.}
\subjclass[2010]{14N35; 32G15; 05A15}
\date{}
\begin{abstract}
We represent stationary descendant Gromov-Witten invariants of projective space, up to explicit combinatorial factors, by polynomials.  One application gives the asymptotic behaviour of large degree behaviour of stationary descendant Gromov-Witten invariants in terms of intersection numbers over the moduli space of curves.  We also show that primary Gromov-Witten invariants are "virtual" stationary descendants and hence the string and divisor equations can be understood purely in terms of stationary invariants.
\end{abstract}

\maketitle

\tableofcontents

\section{Introduction} \label{introduction}

Let $X$ be a projective algebraic variety and $(C,x_1,\dots,x_n)$ a connected smooth curve of genus $g$ with $n$ distinct marked points.  For $\beta \in H_2(X,\bz)$ the moduli space of maps $\modm^g_n(X,\beta)$ consists of morphisms 
$$\pi: (C,x_1,\dots,x_n)\rightarrow X$$
satisfying $\pi_\ast [C]=\beta$ quotiented by isomorphisms of the domain $C$ that fix each $x_i$.  The moduli space has a compactification $\overline{\modm}^g_n(X,\beta)$ given by the moduli space of stable maps:  the domain $C$ is a connected nodal curve; the distinct points $\{x_1,\dots,x_n\}$ avoid the nodes; any genus zero irreducible component of $C$ with fewer than three distinguished points (nodal or marked) must not be collapsed to a point; any genus one irreducible component of $C$ with no marked point must not be collapsed to a point.  The moduli space of stable maps may have irreducible components of different dimensions but its expected or virtual dimension is 
\begin{equation}   \label{eq:vdim}
\dim\overline{\modm}^g_n(X,\beta)=\langle c_1(X),\beta\rangle +(\dim X-3)(1-g)+n.
\end{equation}
Any cohomology class $\gamma\in H^*(X,\bz)$ pulls back to a cohomology class $ev^\ast_i(\gamma)$ in $H^*(\overline{\modm}^g_n(X,\beta),\bq)$ via the evaluation map $ev_i:\overline{\modm}^g_n(X,\beta)\longrightarrow X, ~ ev_i(\pi)=\pi(x_i)$ for $i=1,\dots,n$.  Further cohomology classes $\psi_i\in H^2(\overline{\modm}^g_n(X,\beta),\bq)$ are obtained from the first Chern class of the tautological line bundle $\mathcal{L}_i$ over $\overline{\modm}^g_n(X,\beta)$ with fibre given by the cotangent bundle of $T^*_{x_i}C$ over the $i$th marked point.

Define the descendant Gromov-Witten invariants of $X$ by:
\begin{equation}\label{eq:gropoint1}
\left\langle \prod_{i=1}^n\tau_{m_i}(\gamma_i)\right\rangle ^g_{X,\beta}=\int_{[\overline{\modm}^g_n(X,\beta)]^{vir}} \prod_{i=1}^n\psi_i^{m_i}ev_i^\ast(\gamma_i).
\end{equation}
The integration is against the \textit{virtual fundamental class}, $[\overline{\modm}^g_n(X,\beta)]^{vir}$ and (\ref{eq:gropoint1}) is defined to be zero unless $\sum_{i=1}^n m_i+\deg\gamma_i=\langle c_1(X),\beta\rangle +(\dim X-3)(1-g)+n$.  We may drop $g$, $d$ or $X$ from the notation when it is understood.

This paper will be principally concerned with {\em primary} Gromov-Witten insertions $\tau_0(\gamma_i)$  where $m_i=0$ and $\gamma_i$ is arbitrary and {\em stationary} insertions $\tau_m(\gamma_i)$ where $\gamma_i$ is Poincare dual to a point.   For a stationary insertion we usually write $\tau_{m_i}(pt)$ in place of $\tau_m(\gamma_i)$ to emphasise that the $i$th point is stationary, i.e. it must map to a given point in $X$.  

Restrict to $X=\bp^N$ for $N>0$ and let $\omega\in H^2(\bp^N,\bq)$ be the generator of $H^*(\bp^N,\bq)$ so $\omega^N$ is the Poincare dual class of a point.  The degree of a map $C\to\bp^N$ is simply an integer $d\in\bz\cong H_2(\bp^N)$.  Also $c_1(\bp^N)=(N+1)\omega$ hence $\dim\overline{\modm}^g_n(\bp^N,d)=(N+1)d+(N-3)(1-g)+n$ and this gives rise to $\bmod{\ N+1}$ dependence of the invariants.

Primary Gromov-Witten invariants and 1-point descendant Gromov-Witten invariants are  fundamental via various reconstruction theorems for Gromov-Witten invariants \cite{GatTop,GivSem,KMaGro}.  In this paper we take a different point of view and show that the {\em stationary} Gromov-Witten invariants are somehow fundamental and particularly well-behaved.  They have a nice polynomial form which allows closed form expressions and they satisfy recursions (without using non-stationary Gromov-Witten invariants.) They take the position as the fundamental invariants since the primary invariants can be represented as virtual stationary invariants, 
\[ \tau_0(\omega^k)="\tau_{k-N}(pt)",\quad k=0,...,N.\]
The negative stationary insertion is explained in Theorem~\ref{th:negeval}.  With this viewpoint, the divisor and string equations, which usually require a non-stationary term, become relations between stationary invariants alone.

Given $m,N\in\{1,2,3,...\}$, define 
\[ c_N(m)=\left\lceil m/N\right\rceil\cdot c_N(m-1),\quad c_N(0)=1\]
where the ceiling function $\left\lceil r \right\rceil$ is the smallest integer not less than $r$. 
So $c_N(m)$  generalises $m!=c_1(m)$.  For $m>0$, an explicit formula is
$c_N(m)=\left\lceil m/N\right\rceil !^N\left\lceil m/N\right\rceil^{m-N\lceil m/N\rceil}$.

The stationary Gromov-Witten invariants of $\bp^N$ have polynomial behaviour as follows.
\begin{theorem} \label{th:GWquasi}
For $2g-2+n>0$ and $m_i\geq 3g-1$, $i\in\{1,...,n\}$ define
\begin{equation}\label{eq:quasi}
p^{(N)}_g(m_1,\dots,m_n):=\left\langle \prod_{i=1}^n\tau_{m_i}(pt)\right\rangle^g_{\bp^N}\hspace{-.2cm}\cdot\hspace{.2cm}\prod_{i=1}^nc_{N+1}(m_i).
\end{equation} 
Then $p^{(N)}_g(m_1,\dots,m_n)$ is a degree $3g-3+n$ symmetric quasi-polynomial, in the sense that it is polynomial on each coset of the sublattice $(N+1)\bz^n \subset \bz^n$.  The top coefficients $c^{(N)}_{\beta}$ of $m_1^{\beta_1}\cdots m_n^{\beta_n}$ are given by 
\begin{equation}  \label{eq:gwcoeff}
c^{(N)}_{\beta}=(N+1)^{3-2g-n}\int_{\overline{\modm}_{g,n}}\psi_1^{\beta_1}...\psi_n^{\beta_n}
\end{equation}
for $|\beta|=3g-3+n$.
\end{theorem}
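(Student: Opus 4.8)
The plan is to combine three ingredients: the dimension constraint of \eqref{eq:vdim}, which rigidifies the degree $d$ and explains the sublattice $(N+1)\bz^n$; a structural formula expressing each stationary descendant of $\bp^N$ through the operator formalism for the Gromov--Witten theory of projective space (equivalently, through the oscillatory-integral mirror of $\bp^N$ and its associated quantum data), from which quasi-polynomiality and the sharp degree are read off; and a large-degree stabilisation argument identifying the top coefficients with $\psi$-intersection numbers on $\overline{\modm}_{g,n}$. For the reduction: by \eqref{eq:vdim} the invariant $\langle\prod_i\tau_{m_i}(pt)\rangle^g_{\bp^N}$ vanishes unless $\sum_i m_i + nN=(N+1)d+(N-3)(1-g)+n$, so $d$ is an affine-linear function of $\sum_i m_i$ whose value depends on the $m_i$ only through $\sum_i m_i\bmod(N+1)$. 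Fixing a coset $\mathbf m_0+(N+1)\bz^n$ therefore turns $d$ into a genuine free variable, and the theorem becomes the assertion that $\langle\prod_i\tau_{m_i}(pt)\rangle^g_{\bp^N}\cdot\prod_i c_{N+1}(m_i)$ is, on that coset, a symmetric polynomial in $(m_1,\dots,m_n)$ of degree $3g-3+n$; symmetry is built in, and the hypothesis $m_i\ge 3g-1$ is the stability bound placing the contributing domain configurations in the polynomial range.

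\textbf{The representation formula.} I would assemble the $\langle\prod_i\tau_{m_i}(pt)\rangle^g_{\bp^N}$ into an $n$-point generating series and express it, for fixed $g$, as a finite sum of products of explicit building blocks: one factor per stationary insertion, carrying all the $m_i$-dependence, built from the kernel/operator attached to $\tau_{m_i}(pt)$ in the infinite-wedge description of the stationary theory of $\bp^N$ (for $N=1$ this is the Okounkov--Pandharipande picture), glued by a genus-$g$ vertex contribution independent of the $m_i$. The key lemma is that on a fixed coset the factor attached to $\tau_{m_i}(pt)$ equals $c_{N+1}(m_i)^{-1}$ times a polynomial in $m_i$: the product $\prod_{j=1}^{m}\lceil j/(N+1)\rceil$ defining $c_{N+1}(m)$ is precisely the accumulated ``local denominator'' produced by that kernel, generalising the $m!$ of the $N=1$ case. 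Clearing these denominators by multiplying through by $\prod_i c_{N+1}(m_i)$, and using that a finite sum and product of polynomials-on-a-coset is again such a polynomial, shows $p^{(N)}_g$ is a symmetric quasi-polynomial. A homogeneity count in the same formula --- the Euler-characteristic weight $2g-2+n$ from the genus expansion together with the extra $g-1$ carried by the vertex --- pins the degree at $(2g-2+n)+(g-1)=3g-3+n$ and shows it is attained.

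\textbf{Top coefficients.} For \eqref{eq:gwcoeff} I would let $m_i\to\infty$ along the fixed coset, so $d\to\infty$. In this regime the building blocks degenerate to their leading ``Airy'' parts and the genus-$g$ vertex degenerates to the Witten--Kontsevich vertex; equivalently, the large-degree part of $[\overline{\modm}^g_n(\bp^N,d)]^{vir}$ concentrates on the moduli of the domain $\overline{\modm}_{g,n}$, with the stationary conditions turning into $\psi$-class insertions. The only surviving normalisation is a fixed power of $N+1$, which the bookkeeping of the coset index against the quantum parameter fixes to $(N+1)^{3-2g-n}$, giving $c^{(N)}_\beta=(N+1)^{3-2g-n}\int_{\overline{\modm}_{g,n}}\psi_1^{\beta_1}\cdots\psi_n^{\beta_n}$ for $|\beta|=3g-3+n$. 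I would sanity-check against $N=1$ (the known stationary theory of $\bp^1$, where $c_2(m)=\lfloor m/2\rfloor!\,\lceil m/2\rceil!$) and against small $(g,n)$ by direct evaluation.

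\textbf{Main obstacle.} The crux is the key lemma of the second step: proving that the $\tau_{m_i}(pt)$-factor becomes \emph{exactly} polynomial after multiplication by $c_{N+1}(m_i)$, and that the period is \emph{exactly} $N+1$ rather than a proper multiple --- that is, identifying the correct combinatorial normalisation for general $N$ and controlling it uniformly in $d$. The large-degree stabilisation of the third step is technically the heaviest part, since it requires the asymptotics of the virtual fundamental class as $d\to\infty$; if a clean ELSV-type (topological-recursion) identity for the stationary descendants of $\bp^N$ is available, both steps are subsumed by it.
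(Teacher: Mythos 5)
Your proposal correctly isolates the role of the dimension constraint (the degree $d$ is an affine function of $\sum_i m_i$, fixed on each coset of $(N+1)\bz^n$), but the two load-bearing steps rest on structures that are not available for $\bp^N$ with $N\ge 2$, and you yourself flag them as the ``main obstacle'' without supplying a resolution. The representation formula you posit --- one kernel factor per stationary insertion carrying all the $m_i$-dependence, glued by an $m$-independent genus-$g$ vertex, via the infinite-wedge operator formalism --- is the Okounkov--Pandharipande GW/Hurwitz picture, which is established only for target \emph{curves}. No such operator description of the stationary sector of $\bp^N$, $N\ge 2$, is known, so the ``key lemma'' (that the per-insertion factor times $c_{N+1}(m_i)$ is exactly polynomial on a coset with period exactly $N+1$) has no object to be proved about. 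Similarly, the assertion that the large-degree part of $[\overline{\modm}^g_n(\bp^N,d)]^{vir}$ ``concentrates'' on $\overline{\modm}_{g,n}$ with stationary conditions turning into $\psi$-classes is a heuristic, not an argument; nothing in your write-up controls the virtual class as $d\to\infty$. A smaller point: the hypothesis $m_i\ge 3g-1$ is not a stability bound on domain configurations; it is forced by the genus-$g$ topological recursion relation, which only applies to insertions $\tau_{m+3g-1}$.

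The paper's actual mechanism is entirely different and purely recursive: it uses the genus-zero TRR (\ref{eq:TRR0}) and the genus-$g$ TRR (\ref{eq:TRRg}) of Eguchi--Xiong, Gathmann and Liu to strip one descendant at a time, with the induction anchored on the genus-zero one- and two-point functions computed in closed form (Lemma~\ref{th:2primdes}); the combinatorial factor $c_{N+1}(m)$ enters through the identity $c_{N+1}(m)=\lceil m/(N+1)\rceil\,c_{N+1}(m-1)$ absorbed at each TRR step, and the degree count $3g-3+n$ comes from adding the degrees of the two factors produced by the recursion. The top coefficients are then identified not by a virtual-class degeneration but by observing that the recursion makes them independent of $N$ up to the factor $(N+1)^{3-2g-n}$, and evaluating in the $N=0$ model (intersection numbers on $\overline{\modm}_{g,n+d}$ with $d$ unlabeled points), where Lemma~\ref{th:formula0} gives the explicit polynomial $\sum_{|\beta|=3g-3+n}\prod_i\binom{m_i}{\beta_i}\beta_i!\,\langle\prod_i\tau_{\beta_i}\rangle^g$ whose leading coefficients are visibly the $\psi$-intersection numbers. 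To repair your proposal you would either need to prove the operator formalism for $\bp^N$ (a substantial open-ended task) or replace both steps by the TRR induction; as written, the argument does not close.
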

{\em Remark.}  We expect to be able to drop the restriction $m_i\geq 3g-1$.  This is true for $g=0$ since $m_i$ are necessarily non-negative and the theorem can be strengthened so that for $g=1$ the restriction $m_i\geq 3g-1$ can also be dropped.  For $g=2$ it can be relaxed to $m_i\geq 2$.  

By the dimension constraint, $p^{(N)}_g(m_1,\dots,m_n)$ is non-trivial only when $\sum m_i\equiv2(2g-2+n)\bmod{N+1}$.

The genus zero 1 and 2-point functions $p^0_1(m)$ and $p^0_2(m_1,m_2)$ defined analogously to (\ref{eq:quasi}) in Section~\ref{sec:gwpol} can be thought of as degree -2 and -1 quasi-polynomials, respectively.
\begin{corollary}  \label{th:asym}
For $2g-2+n>0$, the stationary Gromov-Witten invariants of $\bp^N$ behave asymptotically as
\begin{equation}\label{eq:asym}
\left\langle \prod_{i=1}^n\tau_{m_i}(pt)\right\rangle^g_{\bp^N}
\hspace{-.1cm}\sim\hspace{.2cm}
 \frac{(N+1)^{3-2g-n}}{\prod_{i=1}^n{c_{N+1}(m_i)}}\sum_{|\beta|=3g-3+n}m_1^{\beta_1}\cdots m_n^{\beta_n}\int_{\overline{\modm}_{g,n}}\psi_1^{\beta_1}...\psi_n^{\beta_n}.
\end{equation}
\end{corollary}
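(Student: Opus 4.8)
The plan is to read off the corollary directly from Theorem~\ref{th:GWquasi}; there is essentially no new content. By the definition \eqref{eq:quasi}, for $m_i\geq 3g-1$ we have
\[
\left\langle \prod_{i=1}^n\tau_{m_i}(pt)\right\rangle^g_{\bp^N}=\frac{p^{(N)}_g(m_1,\dots,m_n)}{\prod_{i=1}^nc_{N+1}(m_i)},
\]
so it suffices to control the growth of the quasi-polynomial $p^{(N)}_g$ as the $m_i$ tend to infinity. Fix a residue vector $(r_1,\dots,r_n)$ modulo $N+1$; on the coset $\prod_i\bigl(r_i+(N+1)\bz\bigr)$ the function $p^{(N)}_g$ agrees with an honest polynomial $P_r$ of degree $3g-3+n$, which we decompose as $P_r=L+R_r$ where $L$ is the top-degree homogeneous part and $\deg R_r\leq 3g-4+n$. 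Theorem~\ref{th:GWquasi} says that $L$ is independent of $r$ and, by \eqref{eq:gwcoeff},
\[
L(m_1,\dots,m_n)=(N+1)^{3-2g-n}\sum_{|\beta|=3g-3+n}m_1^{\beta_1}\cdots m_n^{\beta_n}\int_{\overline{\modm}_{g,n}}\psi_1^{\beta_1}\cdots\psi_n^{\beta_n},
\]
which is exactly the right-hand side of \eqref{eq:asym} multiplied by $\prod_i c_{N+1}(m_i)$.

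It then remains to show that $P_r(m)/L(m)\to 1$, \ie that $R_r(m)/L(m)\to 0$, as the $m_i\to\infty$. Here we would use that every intersection number $\int_{\overline{\modm}_{g,n}}\psi_1^{\beta_1}\cdots\psi_n^{\beta_n}$ with $2g-2+n>0$ is a positive rational number, so all coefficients of the symmetric polynomial $L$ are strictly positive. For any monomial $m^\gamma$ occurring in $R_r$ we have $|\gamma|\leq 3g-4+n$, so we may choose $\beta\geq\gamma$ coordinatewise with $|\beta|=3g-3+n$ (for instance by loading the missing $3g-3+n-|\gamma|\geq 1$ units onto the first coordinate); since the corresponding coefficient of $L$ is positive, $L(m)\geq c\,m^\gamma\cdot\min_i m_i$ for some $c>0$, hence $m^\gamma/L(m)\to 0$ as $\min_i m_i\to\infty$. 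Summing over the finitely many monomials of $R_r$, and then over the finitely many cosets, gives $P_r(m)/L(m)\to 1$ uniformly in the residues; dividing by $\prod_i c_{N+1}(m_i)$ yields \eqref{eq:asym}. (When $3g-3+n=0$, i.e.\ $(g,n)=(0,3)$, $R_r$ vanishes and \eqref{eq:asym} is an exact identity.)

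The corollary is thus entirely contained in Theorem~\ref{th:GWquasi}, and the closest thing to an obstacle is the precise meaning of $\sim$ in the presence of the quasi-polynomial structure. Since $p^{(N)}_g$ vanishes identically unless $\sum m_i\equiv 2(2g-2+n)\bmod{N+1}$, the asymptotic equivalence in \eqref{eq:asym} is to be read along sequences lying in the (finitely many) cosets satisfying that congruence; on the remaining cosets the left-hand side is zero while $L$ is not, so one restricts to non-vanishing invariants, which is pure bookkeeping. If one only wants \eqref{eq:asym} along proportional directions $m_i=t\,a_i$ with $t\to\infty$ and fixed $a_i>0$, the positivity of the $\psi$-intersection numbers can be weakened to the mere non-vanishing of $L$, which is immediate from \eqref{eq:gwcoeff}. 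No ingredient beyond Theorem~\ref{th:GWquasi} and the standard positivity of $\psi$-class intersection numbers on $\overline{\modm}_{g,n}$ is needed.
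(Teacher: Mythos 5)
Your proposal is correct and takes essentially the same route as the paper, which derives the corollary as an immediate consequence of Theorem~\ref{th:GWquasi} with the highest-degree terms governed by the $N=0$ case. Your extra steps --- splitting off the top-degree homogeneous part $L$, using positivity of the $\psi$-intersection numbers to show the lower-order quasi-polynomial remainder is dominated, and restricting to the cosets where the invariant is non-vanishing --- are just a careful spelling-out of what the paper leaves implicit.
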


The quasi-polynomial $p^{(N)}_g(m_1,...,m_n)$ only makes enumerative sense when its entries satisfy $m_i\geq 3g-1$.  As mentioned above, we expect evaluation at $0\leq m_i<3g-1$ to give the expected stationary invariants.  Furthermore, evaluation at the negative integers $k-N$ for $k=0,...,N-1$, makes sense and one can give an enumerative interpretation as follows.
\begin{theorem}\label{th:negeval}For $2g-2+n\geq 0$  
\[ \tau_0(\omega^k)="\tau_{k-N}(pt)",\quad k=0,...,N\]
via evaluation of the quasi-polynomial $p^{(N)}_g(m_1,...,m_n)$ at negative integers.  More precisely: 
\begin{equation}\label{eq:evalneg}
\left\langle\prod_{j=1}^s\tau_0(\omega^{k_j}) \prod_{i=1}^n\tau_{m_i}(pt)\right\rangle^g_{\bp^N}\hspace{-.2cm}\cdot\hspace{.2cm}\prod_{i=1}^nc_{N+1}(m_i)=
p^{(N)}_g(k_1-N,...,k_s-N,m_1,...,m_n).
\end{equation} 
\end{theorem}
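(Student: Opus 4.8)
The plan is to deduce (\ref{eq:evalneg}) from the explicit polynomial representation of the stationary invariants constructed in Section~\ref{sec:gwpol} (the representation underlying Theorem~\ref{th:GWquasi}), using the string and divisor equations on $\bp^N$ as the dictionary between primary and stationary insertions. First I would record what these equations say for all-stationary correlators. Since $\omega^k\cup\omega^N=0$ in $H^*(\bp^N,\bq)$ for every $k\geq 1$, the divisor equation for $\gamma=\omega$ collapses: every descendant-lowering term is annihilated by the cup product with the point class, and likewise a lowered $\tau_{-1}(\omega^{k'})$ attached to another primary leg vanishes, leaving
\[
\left\langle \tau_0(\omega)\prod_{i=1}^n\tau_{m_i}(pt)\right\rangle^g_{\bp^N,d}=d\left\langle \prod_{i=1}^n\tau_{m_i}(pt)\right\rangle^g_{\bp^N,d},
\]
while the string equation gives $\langle \tau_0(1)\prod_i\tau_{m_i}(pt)\rangle^g_{\bp^N}=\sum_i\langle \tau_{m_i-1}(pt)\prod_{j\neq i}\tau_{m_j}(pt)\rangle^g_{\bp^N}$. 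Using $c_{N+1}(m)=\lceil m/(N+1)\rceil\,c_{N+1}(m-1)$ these become the quasi-polynomial identities $p^{(N)}_g(1-N,m_1,\dots,m_n)=d(m)\,p^{(N)}_g(m_1,\dots,m_n)$, with $d(m)$ the integer degree determined linearly by the dimension constraint, and $p^{(N)}_g(-N,m_1,\dots,m_n)=\sum_{i=1}^n\lceil m_i/(N+1)\rceil\,p^{(N)}_g(m_1,\dots,m_i-1,\dots,m_n)$ (note $1-N\equiv2$ and $-N\equiv1$ modulo $N+1$, which selects the relevant cosets). Verifying these two identities directly from the formula of Section~\ref{sec:gwpol}, and iterating, settles the case in which every $k_j\in\{0,1\}$; together with the tautology $k=N$ ($\tau_0(pt)=\tau_0(pt)$) this also covers the low cases $(g,n)\in\{(0,2),(1,0)\}$ permitted by $2g-2+n\geq0$, where $p^{(N)}_g$ is one of the degree $-1$ or degree $-2$ quasi-polynomials.

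For a general primary insertion $\tau_0(\omega^k)$ with $1\leq k\leq N$, I would revisit the construction of $p^{(N)}_g$ in Section~\ref{sec:gwpol}: the quasi-polynomial is assembled from local contributions attached to the marked points, and the claim is that a leg decorated by $ev_i^*(\omega^k)$ contributes exactly the local factor of a stationary leg $\tau_m(pt)$ continued to $m=k-N$, with no residual correction. Concretely one re-runs the derivation with one evaluation class $ev_i^*(\omega^N)$ replaced by $ev_i^*(\omega^k)$ and reads off the resulting factor; the dimension bookkeeping is automatically consistent because $\tau_0(\omega^k)$ and $\tau_{k-N}(pt)$ carry the same total cohomological degree $2k$, so the selection rule for nonvanishing is preserved. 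Once this is established for a single leg, the factors for several primary legs multiply to give $p^{(N)}_g(k_1-N,\dots,k_s-N,m_1,\dots,m_n)$ on the nose; alternatively one inducts on the number $s$ of primary insertions, applying the single-leg statement at each step and using the $k\in\{0,1\}$ reductions of the first paragraph as the innermost cases. Finally I would note that no $c_{N+1}$ factor is ever attached to a primary slot, which is exactly why the product $\prod_{i=1}^n c_{N+1}(m_i)$ on the left of (\ref{eq:evalneg}) ranges only over the genuine stationary insertions.

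The main obstacle is the general-$k$ step: one must track the contribution of a primary leg through the combinatorial derivation of the polynomial formula and confirm that it is precisely the $m\mapsto k-N$ specialization of a stationary leg. Conceptually this is forced by the fact that in the quantum cohomology of $\bp^N$ the hyperplane class $\omega$ and the descendant variable enter the Dubrovin connection on equal footing, the defining relation being $\omega^{\star(N+1)}=q$, so that raising a $\psi$-power and raising an $\omega$-power are interchangeable operations on correlators; the work is to make this rigid enough to hold at the level of the quasi-polynomial itself rather than merely its generating function, for which the two identities of the first paragraph serve simultaneously as a guide and as consistency checks.
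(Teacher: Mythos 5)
Your first paragraph is largely circular as a proof strategy: the identities $p^{(N)}_g(1-N,m_1,\dots,m_n)=d\cdot p^{(N)}_g(m_1,\dots,m_n)$ and $p^{(N)}_g(-N,m_1,\dots,m_n)=\sum_i\lceil m_i/(N+1)\rceil\, p^{(N)}_g(\dots,m_i-1,\dots)$ are obtained from the divisor and string equations only \emph{after} one already knows that $\langle\tau_0(\omega)\prod\tau_{m_i}(pt)\rangle\cdot\prod c_{N+1}(m_i)$ and $\langle\tau_0(1)\prod\tau_{m_i}(pt)\rangle\cdot\prod c_{N+1}(m_i)$ equal the quasi-polynomial evaluated at $1-N$ and $-N$ respectively --- which is precisely the $s=1$, $k\in\{0,1\}$ instance of the theorem you are trying to prove. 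The paper records these identities as a \emph{corollary} of Theorem~\ref{th:negeval}, not as a route to it; and there is no closed formula for $p^{(N)}_g$ for general $N>0$ (Lemma~\ref{th:formula0} is only the $N=0$ case), so ``verifying these two identities directly from the formula'' is not available.

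The more serious gap is the one you flag yourself: the general-$k$ step. The quasi-polynomial is not assembled from independent ``local contributions attached to the marked points''; it is built recursively through the topological recursion relations, so there is no single local factor per leg to continue to $m=k-N$. The paper's actual mechanism, in Theorem~\ref{th:negeval2}, is the following. Peeling off a stationary leg $\tau_{n_1}(pt)$ with the TRR (\ref{eq:TRR0}) or (\ref{eq:TRRg}) isolates one summand in which $\tau_{n_1-1}(pt)$ sits in a genus zero two-point invariant; by Lemma~\ref{th:2primdes} that factor satisfies $\langle\tau_{n_1-1}(pt)\tau_0(\omega^{k})\rangle^0\cdot c_{N+1}(n_1)=1$, so this summand reproduces exactly the correlator with $\tau_{n_1}(pt)$ replaced by the primary insertion $\tau_0(\omega^{k_1})$. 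Every \emph{other} summand carries an explicit factor $\lceil n_1/(N+1)\rceil$ coming from $c_{N+1}(n_1)=\lceil n_1/(N+1)\rceil c_{N+1}(n_1-1)$, and the whole point is that $\lceil(k_1-N)/(N+1)\rceil=0$ for $k_1\in\{0,\dots,N\}$, so all correction terms vanish identically upon evaluating the quasi-polynomial at $n_1=k_1-N$. Iterating this over the $s$ legs, and separately running an induction that reduces the $n>0$ case to the $n=0$ case by matching TRR expansions term by term (with the degree bookkeeping $n_j\equiv k_j-N\bmod{N+1}$ selecting the correct coset), gives the theorem; the genus zero two-point case is checked directly from Lemma~\ref{th:2primdes}. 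Without the vanishing of the ceiling factor at negative arguments there is no reason the evaluation at $k-N$ should pick out the primary invariant with no residual correction, and your proposal does not supply this ingredient.
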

The Gromov-Witten invariants are not quasi-polynomial in non-stationary descendant variables.  A counter-example is given in Section~\ref{sec:gwpol}.  

The divisor and string equations \cite{WitTwo} satisfied by stationary Gromov-Witten invariants are:  
\begin{align*}
\left\langle \tau_0(\omega)\prod_{i=1}^n\tau_{m_i}(pt)\right\rangle^g_d&=d\left\langle \prod_{i=1}^n\tau_{m_i}(pt)\right\rangle^g_d&{\rm divisor\ equation}\\
\left\langle \tau_0(1)\prod_{i=1}^n\tau_{m_i}(pt)\right\rangle^g_d&=\sum_{i=1}^n\left\langle \tau_{m_1}(pt)\cdots\tau_{m_i-1}(pt)\cdots \tau_{m_n}(pt)\right\rangle^g_d&{\rm string\ equation}
\end{align*}
where the term $\tau_{m_i-1}(pt)$ vanishes if $m_i=0$.  They necessarily involve the non-stationary terms $\tau_0(\omega)$ and $\tau_0(1)$.  Using Theorem~\ref{th:negeval} to interpret $\tau_0(\omega)=\tau_{1-N}(pt)$ and $\tau_0(1)=\tau_{-N}(pt)$ we get relations involving only stationary terms.
\begin{corollary}
The divisor and string equations can be expressed entirely in terms of stationary invariants.
\end{corollary}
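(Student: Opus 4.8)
The plan is to deduce the corollary directly from Theorem~\ref{th:negeval}: once the two non-stationary insertions appearing in the classical equations are reinterpreted as virtual stationary ones, the equations are read off verbatim. Recall that in $H^*(\bp^N,\bq)$ one has $1=\omega^0$, that the hyperplane class is $\omega=\omega^1$, and that $0,1\in\{0,1,\dots,N\}$ since $N>0$. Thus Theorem~\ref{th:negeval}, applied with a single primary insertion ($s=1$ in (\ref{eq:evalneg})), supplies the identifications $\tau_0(1)="\tau_{-N}(pt)"$ (taking $k=0$) and $\tau_0(\omega)="\tau_{1-N}(pt)"$ (taking $k=1$), where the symbol $\tau_{k-N}(pt)$ denotes the value of the quasi-polynomial $p^{(N)}_g$ with $k-N$ inserted in the corresponding slot, divided by the combinatorial factors $c_{N+1}(m_i)$ attached to the remaining genuine stationary slots --- which is exactly what (\ref{eq:evalneg}) records.

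Next I would substitute these into the divisor and string equations of \cite{WitTwo} quoted above. Replacing $\tau_0(\omega)$ by $\tau_{1-N}(pt)$ in the divisor equation gives
\[\left\langle \tau_{1-N}(pt)\prod_{i=1}^n\tau_{m_i}(pt)\right\rangle^g_d=d\left\langle \prod_{i=1}^n\tau_{m_i}(pt)\right\rangle^g_d,\]
and replacing $\tau_0(1)$ by $\tau_{-N}(pt)$ in the string equation gives
\[\left\langle \tau_{-N}(pt)\prod_{i=1}^n\tau_{m_i}(pt)\right\rangle^g_d=\sum_{i=1}^n\left\langle \tau_{m_1}(pt)\cdots\tau_{m_i-1}(pt)\cdots\tau_{m_n}(pt)\right\rangle^g_d,\]
with the summand dropped whenever $m_i=0$, exactly as in the classical equation; every insertion now has stationary type $\tau_k(pt)$. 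If one prefers, both identities can be phrased purely through the quasi-polynomial by clearing the factors $c_{N+1}$: since $c_{N+1}(m)/c_{N+1}(m-1)=\lceil m/(N+1)\rceil$ by the defining recursion, the string equation becomes the recursion
\[p^{(N)}_g(-N,m_1,\dots,m_n)=\sum_{i=1}^n\lceil m_i/(N+1)\rceil\, p^{(N)}_g(m_1,\dots,m_i-1,\dots,m_n),\]
in which the coefficient vanishes precisely when $m_i=0$, while the divisor equation becomes $p^{(N)}_g(1-N,m_1,\dots,m_n)=d\cdot p^{(N)}_g(m_1,\dots,m_n)$, where throughout $d$ is the integer determined by the dimension constraint $\sum_i(m_i+N)=(N+1)d+(N-3)(1-g)+n$.

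The only thing to watch is bookkeeping rather than any genuine difficulty. One must check that the hypotheses of Theorem~\ref{th:negeval} hold for the configuration at hand --- in particular $2g-2+n\geq 0$ and, since $p^{(N)}_g$ is established in Theorem~\ref{th:GWquasi} only at arguments $\geq 3g-1$, that the genuine stationary entries $m_i$ (and, in the string case, the shifted entry $m_i-1$) meet that bound, a restriction the paper expects to be removable --- and that the degree $d$ occurring in the classical equations coincides with the one fixed by the dimension constraint on each side, so that the reinterpreted identities are consistent. Beyond this there is nothing to prove: the string and divisor equations are classical, and the content of the corollary is carried entirely by the identification of primary with virtual stationary insertions in Theorem~\ref{th:negeval}.
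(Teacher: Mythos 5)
Your proposal is correct and follows essentially the same route as the paper: Theorem~\ref{th:negeval} (with $k=0$ and $k=1$) is used to rewrite $\tau_0(1)$ and $\tau_0(\omega)$ as virtual stationary insertions, and after clearing the factors $c_{N+1}(m_i)$ via $c_{N+1}(m)/c_{N+1}(m-1)=\lceil m/(N+1)\rceil$ one obtains exactly the two quasi-polynomial identities $p^{(N)}_g(1-N,m_1,\dots,m_n)=d\cdot p^{(N)}_g(m_1,\dots,m_n)$ and $p^{(N)}_g(-N,m_1,\dots,m_n)=\sum_i\lceil m_i/(N+1)\rceil\,p^{(N)}_g(m_1,\dots,m_i-1,\dots,m_n)$ that the paper records in Section~\ref{sec:P1}. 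Your added bookkeeping remarks (the vanishing of the coefficient at $m_i=0$ and the hypotheses of Theorem~\ref{th:negeval}) are consistent with, and slightly more explicit than, the paper's presentation.
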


The $N=0$ and $N=1$ cases are both interesting and important.  The $N=0$ case does not correspond directly to target space the point $\bp^0$ which admits only degree 0 maps.  Rather, we introduce a degree $d$ by allowing $d$ unlabeled points on the domain curves.  This case is important because it is used to calculate the top degree terms of $p^{(N)}_g(m_1,\dots,m_n)$ which gives the large degree asymptotic behaviour of the Gromov-Witten invariants of $\bp^N$.  It also shows that the top degree terms of $p^{(N)}_g(m_1,\dots,m_n)$ are in fact polynomial rather than quasi-polynomial.  The $N=1$ case was studied in \cite{NScGro} where the quasi-polynomial behaviour and large degree asymptotic behaviour was conjectured.  Understanding this is part of the motivation for this paper.

Following \cite{NScGro}, assemble the (connected) stationary Gromov-Witten invariants of $\bp^1$ into the generating function multidifferential
\[\Omega^g_n(x_1,...,x_n)=\sum_{\vec{m}}\left\langle \prod_{i=1}^n \tau_{m_i}(pt) \right\rangle^g_{\bp^1}\hspace{-.2cm}\cdot\hspace{.2cm}\prod_{i=1}^n(m_i+1)!x_i^{-m_i-2}dx_i.\]
\begin{theorem}  \label{th:p1gen}
For $2g-2+n>0$, $\Omega^g_n(x_1,...,x_n)$ is analytic around $x_i=\infty$ and extends to a meromorphic multidifferential on the compact Riemann surface defined by $x=z+1/z$.  It has poles at $z_i=\pm 1$ of order $6g-4+2n$ in each variable and asymptotic behaviour
\[
\Omega^g_n\sim s^{6-6g-3n}\frac{1}{2^{5g-5+2n}}\sum_{|\beta|}\prod_{i=1}^n \frac{(2\beta_i+1)!}{\beta_i!}\frac{dt_i}{t_i^{2\beta_i+2}}\langle \tau_{\beta_1}...\tau_{\beta_n}\rangle_g, 
\]
for the local variable $z_i=\pm1+s\cdot t_i$.
\end{theorem}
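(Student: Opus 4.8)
The plan is to derive Theorem~\ref{th:p1gen} from the $N=1$ case of Theorem~\ref{th:GWquasi} together with the explicit shape of the constants $c_2(m)$. First I would record that $c_2(m)=\lceil m/2\rceil!\,\lfloor m/2\rfloor!$, so that the coefficient appearing in $\Omega^g_n$ is
\[
\left\langle \prod_{i=1}^n\tau_{m_i}(pt)\right\rangle^g_{\bp^1}\prod_{i=1}^n(m_i+1)!
=p^{(1)}_g(m_1,\dots,m_n)\cdot\prod_{i=1}^n\frac{(m_i+1)!}{c_2(m_i)},
\]
and, writing $j=\lceil m/2\rceil$, that $\frac{(m+1)!}{c_2(m)}=(2j+1)\binom{2j}{j}$ for $m=2j$ and $=j\binom{2j}{j}$ for $m=2j-1$; on each parity coset this is a polynomial in $m$ times the central binomial coefficient $\binom{2j}{j}$. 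By Theorem~\ref{th:GWquasi} with $N=1$, $p^{(1)}_g$ is a symmetric quasi-polynomial of degree $3g-3+n$, polynomial on each coset of $2\bz^n$ and identically zero on cosets with $\sum m_i$ odd, with top coefficients $c^{(1)}_\beta=2^{3-2g-n}\int_{\M_{g,n}}\psi_1^{\beta_1}\cdots\psi_n^{\beta_n}$ for $|\beta|=3g-3+n$.

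I would then break $\sum_{\vec m}$ into its $2^{n-1}$ contributing parity sectors and, on each, expand $p^{(1)}_g$ into monomials $m_1^{\beta_1}\cdots m_n^{\beta_n}$, so that everything factorises into one-variable pieces built from $\sum_{j\ge0}j^\beta\binom{2j}{j}w^j=(w\,d/dw)^\beta(1-4w)^{-1/2}=R_\beta(w)(1-4w)^{-1/2}$, where $R_\beta$ is rational with a pole of order exactly $\beta$ at $w=\tfrac14$, so the series behaves like $\tfrac{(2\beta)!}{4^\beta\beta!}(1-4w)^{-\beta-1/2}$ near $w=\tfrac14$. The substitution $w=x^{-2}$, $x=z+1/z$ turns $1-4w$ into $\bigl(\tfrac{z^2-1}{z^2+1}\bigr)^2$, so $(1-4w)^{-1/2}=\tfrac{z^2+1}{z^2-1}$ is rational in $z$ with poles only at $z=\pm1$; multiplying by the remaining factor $x^{-2}\,dx$ (for even $m$) or $x^{-1}\,dx$ (for odd $m$) coming from $x^{-m-2}dx$, one checks inductively that every one-variable building block is a rational multidifferential in $z_i$ whose only poles are at $z_i=\pm1$ (the would-be poles at $z_i=0,\infty$ and at the zeros of $z_i^2+1$ cancelling). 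Summing the finitely many monomials and sectors shows that $\Omega^g_n$ extends to a rational --- hence meromorphic --- multidifferential on $\{x=z+1/z\}$; and since $\frac{(m+1)!}{c_2(m)}\sim\binom{2j}{j}\cdot\mathrm{poly}\sim4^{m/2}\cdot\mathrm{poly}/\sqrt m$ while $p^{(1)}_g$ grows polynomially, the defining series converges for $|x_i|>2$, so $\Omega^g_n$ is analytic near $x_i=\infty$.

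For the pole order, $dx=(1-z^{-2})\,dz$ has a simple zero and $x^{\pm1}$ is regular and nonzero at $z=\pm1$, while applying the monomial operator contributes a factor $(z\mp1)^{-(2\beta+2)}$ to $(1-4w)^{-1/2}\sim(z\mp1)^{-1}$; hence the building block attached to $m^\beta$ is $\sim(z\mp1)^{-2\beta-2}dz$, and the maximal pole order $2(3g-3+n)+2=6g-4+2n$ is attained at $\beta_i=3g-3+n$, which genuinely contributes since $\int_{\M_{g,n}}\psi_i^{3g-3+n}\neq0$ (for $g\ge1$ equal to $\langle\tau_{3g-2}\rangle_g=(24^gg!)^{-1}$ by the string equation). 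For the asymptotics, set $z_i=\epsilon_i+st_i$ with all $\epsilon_i$ equal: each of the $2^{n-1}$ parity sectors contributes, to leading order, the \emph{same} differential $\prod_i\frac{(2\beta_i+1)!}{2^{\beta_i+1}\beta_i!}(z_i-\epsilon_i)^{-2\beta_i-2}dz_i$ --- even and odd $m_i$ give equal leading coefficients once the $x^{-2}dx$ versus $x^{-1}dx$ discrepancy and the branch of $(1-4w)^{-1/2}$ at $z=-1$ are accounted for, the latter sign being killed by the fact that the number of odd $m_i$ in a contributing sector is even. Collecting the $2^{n-1}$ sectors, the factor $2^{3-2g-n}$ from $c^{(1)}_\beta$ and the per-variable factors $\prod_i2^{-\beta_i-1}$, with $|\beta|=3g-3+n$, gives the constant $2^{5-5g-2n}=2^{-(5g-5+2n)}$; rescaling $(z_i-\epsilon_i)^{-2\beta_i-2}dz_i=s^{-2\beta_i-1}t_i^{-2\beta_i-2}dt_i$ produces the global factor $s^{6-6g-3n}$, and summing over $|\beta|=3g-3+n$ reassembles $\int_{\M_{g,n}}\prod_i\psi_i^{\beta_i}=\langle\tau_{\beta_1}\cdots\tau_{\beta_n}\rangle_g$. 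The step I expect to be the main obstacle is precisely this bookkeeping --- tracking the powers of $2$, the number of surviving parity sectors, and the branch signs at $z=-1$ --- since a naive computation treating the $n$ variables as independent and ignoring the $\sum m_i$-even constraint gives the constant wrong by a factor $2^{n-1}$.
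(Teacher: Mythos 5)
Your strategy is the same as the paper's: write the coefficient of $\prod x_i^{-m_i-2}$ as $p^{(1)}_g(\vec m)\cdot\prod_i(m_i+1)!/c_2(m_i)$, invoke the quasi-polynomiality and the top coefficients (\ref{eq:gwcoeff}) of Theorem~\ref{th:GWquasi}, and resum in $x=z+1/z$. The difference is that where the paper writes ``direct calculation, see \cite{NScGro}, shows that $\Omega^g_{n,I}$ is a rational function in $z_i$ with poles only at $z_i=\pm1$,'' you actually supply that calculation, via $c_2(2j)=(j!)^2$, $c_2(2j-1)=j!(j-1)!$, the generating functions $\sum_j j^\beta\binom{2j}{j}w^j=R_\beta(w)(1-4w)^{-1/2}$ and the identity $1-4x^{-2}=\bigl(\tfrac{z^2-1}{z^2+1}\bigr)^2$. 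Your constant-tracking checks out: the extra factor $(2j+1)$ resp.\ $j$ raises the polynomial degree by one and, together with the simple zero of $dx$ at $z=\pm1$, produces pole order $2\beta_i+2$ per variable (hence $6g-4+2n$ at $|\beta|=3g-3+n$); the per-variable leading coefficient $\tfrac{(2\beta_i+1)!}{2^{\beta_i+1}\beta_i!}$ is correct; and $2^{n-1}\cdot 2^{3-2g-n}\cdot\prod_i 2^{-\beta_i-1}=2^{5-5g-2n}$ recovers the stated prefactor. You are also right that the parity constraint $\sum m_i\equiv 0\bmod 2$ is exactly what makes a naive $n$-fold-independent computation off by $2^{n-1}$, and that the nonvanishing of the extremal coefficient follows from $\int_{\overline{\modm}_{g,n}}\psi_i^{3g-3+n}=\langle\tau_{3g-2}\rangle_g\neq0$.

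There is, however, one genuine gap. The identity $\bigl\langle\prod\tau_{m_i}(pt)\bigr\rangle^g_{\bp^1}\prod c_2(m_i)=p^{(1)}_g(\vec m)$ is available only when \emph{every} $m_i\geq 3g-1$, whereas $\Omega^g_n$ sums over all $m_i\geq0$. Your argument therefore establishes rationality only of the partial sum over the region $\{m_i\geq3g-1\ \forall i\}$. For $g\geq1$ the complementary $2^n-1$ pieces, in which some $m_i$ are frozen at values $<3g-1$, still contain infinite sums over the remaining variables, and to conclude that these pieces are rational in the surviving $z_j$ you need quasi-polynomiality of the invariants with those small stationary insertions held fixed --- that is Theorem~\ref{th:GWquasi1} (with the frozen $\tau_{m_i}(pt)$ playing the role of the $\tau_{\kappa_j}(\omega^{k_j})$), not Theorem~\ref{th:GWquasi}. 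This is precisely what the paper's decomposition $\Omega^g_n=\sum_I\Omega^g_{n,I}$ is for. The omission does not affect the pole order or the asymptotics, since only the piece with all $m_i$ large carries the top-degree monomials and hence the deepest poles (the other pieces are polynomial in $x_i^{-1}$ in the frozen variables, shifting their singularities away from $z_i=\pm1$), but as written your proof of meromorphy of the full $\Omega^g_n$ is incomplete for $g\geq1$. A secondary point to make honest is the claimed cancellation of would-be poles at $z=\pm i$ (i.e.\ $x=0$), which you assert ``one checks inductively''; this is the same verification the paper delegates to \cite{NScGro} and should be done once for the one-variable building blocks rather than left implicit.
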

In particular, Theorem~\ref{th:p1gen} proves that the generating function
$\Omega^g_n(x_1,...,x_n)$ for stationary Gromov-Witten invariants of $\bp^1$ is {\em algebraic} and moreover rational.  This was conjectured in \cite{NScGro} along with the asymptotic behaviour at the poles and proven there for $g=0$ and 1 as part of the stronger result that the multidifferentials can be defined in another way: the genus 0 and 1 Gromov-Witten invariants of $\bp^1$ coincide with the Eynard-Orantin invariants \cite{EOrInv,EOrTop} of a particular Riemann surface and are recursively calculable. Conjecturally the stronger result holds for general $g$ generating function multidifferentials. In particular, the generating function multidifferentials for stationary Gromov-Witten invariants of $\bp^1$ are conjecturally known.  For example, the genus two 1-point generating function differential has been checked numerically to coincide with the known differential.  Moreover, the quasi-polynomial applies to all $m_i$ not just those satisfying $m_i\geq 3g-1$.  See Section~\ref{sec:EO} for more details.

Section~\ref{sec:gwpol} contains the topological recursion relations satisfied quite generally by Gromov-Witten invariants, and the $N=0$ case which consists of intersection theory on the moduli space of curves.  These are used to prove Theorem~\ref{th:GWquasi} , Corollary~\ref{th:asym} and Theorem~\ref{th:negeval}.  In Section~\ref{sec:P1} we specialise to the $N=1$ case and study the string and divisor equations for general $N$.  Explicit formulae appear in Section~\ref{sec:ex}.

{\em Acknowledgements.}  I would like to thank the Department of Mathematics at LMU, Munich for its hospitality during the second half of 2011 during which this research was carried out.

\section{Polynomial behaviour of Gromov-Witten invariants}  \label{sec:gwpol}

We will need the following recursion relations satisfied by Gromov-Witten invariants of any target space.  For $\gamma_i\in H^*(X)$,
\begin{align}
\left\langle \tau_0(1)\prod_{i=1}^n\tau_{m_i}(\gamma_i)\right\rangle^g_d&=\sum_{i=1}^n\left\langle \tau_{m_1}(\gamma_1)\cdots\tau_{m_i-1}(\gamma_i)\cdots \tau_{m_n}(\gamma_n)\right\rangle^g_d&{\rm string\ equation}\ \ \\
\left\langle \tau_0(\omega)\prod_{i=1}^n\tau_{m_i}(\gamma_i)\right\rangle^g_d&=d\left\langle \tau_{m_1}(\gamma_1)\cdots \tau_{m_n}(\gamma_n)\right\rangle^g_d\label{GWdivisor}&{\rm divisor\ equation}\\
&\quad+\sum_{i=1}^n\left\langle \tau_{m_1}(\gamma_1)\cdots\tau_{m_i-1}(\gamma_i\cup\omega)\cdots \tau_{m_n}(\gamma_n)\right\rangle^g_d\nonumber&
\end{align}
where $d$ satisfies $(N+1)d+(N-3)(1-g)+n=\sum_{i=1}^n( m_i+\deg\gamma_i)$.

The following standard notation is used for expressing the topological recursion relations  below.
\[
\left\langle\left\langle\prod_{i=1}^n\tau_{m_i}(\gamma_i)\right\rangle\right\rangle^g=
\left\langle\prod_{i=1}^n\tau_{m_i}(\gamma_i)\exp\sum_{k,r}t_k^r\tau_k(\gamma_r)\right\rangle^g
=\sum_{s=0}^{\infty}\frac{1}{s!}\sum_{\vec{k},\vec{r}}\prod_{j=1}^st_{k_j}^{r_j}\left\langle\prod_{i=1}^n\tau_{m_i}(\gamma_i)\prod_{j=1}^s\tau_{k_j}(\gamma_{r_j})\right\rangle^g.
\]
Let $\{T_j\}$ be a basis for $H^*(X)$ and let $\{T^j\}\subset H^*(X)$ be the dual basis obtained via Poincare duality.  Inside intersection brackets, we often identify $T_j$ with its pull-back to the moduli space, i.e. we write $T_j$ for $\tau_0(T_j)$.  We follow the usual convention of summing over any index that appears twice in a formula as a subscript and superscript.

For $\gamma_i\in H^*(X)$ a genus zero topological recursion (TRR) is \cite{WitTwo}
\begin{equation}  \label{eq:TRR0}
\left\langle\left\langle \tau_{m_1+1}(\gamma_1) \tau_{m_2}(\gamma_2) \tau_{m_3}(\gamma_3) \right\rangle\right\rangle^0=\sum_j\left\langle\left\langle \tau_{m_1}(\gamma_1)T_j\right\rangle\right\rangle^0 \left\langle\left\langle T^j\tau_{m_2}(\gamma_2)\tau_{m_3}(\gamma_3)\right\rangle\right\rangle^0.
\end{equation}
For any $g>0$, a topological recursion relation \cite{EXiQua,GatGro,LiuQua} is:
\begin{equation}   \label{eq:TRRg}
\left\langle\left\langle \tau_{m+3g-1}(\gamma)\right\rangle\right\rangle^g
=\sum_{\alpha+\beta=3g-2}\left\langle\left\langle \tau_{\alpha}(T_k)\right\rangle\right\rangle^g
\left\langle\left\langle T^k\tau_m(\gamma)\right\rangle\right\rangle_{(\beta)}
\end{equation}
where the function $\left\langle\left\langle T^j\tau_m(\gamma)\right\rangle\right\rangle_{(\beta)}$ involves only genus zero invariants.  It is defined by
\[ \left\langle\left\langle T^j\tau_m(\gamma)\right\rangle\right\rangle_{(\beta)}=\sum_{k=1}^{3g-1}(-1)^{k-1}\hspace{-1.2cm}\sum_{\begin{array}{c}(m_1,...,m_k)\\k+\displaystyle{\sum_{i>0} m_i}=\beta+1\end{array}} \hspace{-1cm} 
\left\langle\left\langle T_{a_k}\tau_{m_k+m}(\gamma) \right\rangle\right\rangle^0
\prod_{i=1}^{k-1}
\left\langle\left\langle T_{a_i}\tau_{m_i}(T^{a_{i+1}}) \right\rangle\right\rangle^0,\quad a_1=j
\]
and can also be obtained recursively by
\[ \left\langle\left\langle T^j\tau_m(\gamma)\right\rangle\right\rangle_{(\beta)}=
\left\langle\left\langle T^j\tau_{m+1}(\gamma)\right\rangle\right\rangle_{(\beta-1)}
-\left\langle\left\langle T^i\tau_m(\gamma)\right\rangle\right\rangle_0\left\langle\left\langle T_iT^j\right\rangle\right\rangle_{(\beta-1)}\]
with initial condition $\left\langle\left\langle \cdots\right\rangle\right\rangle_{(0)}=\left\langle\left\langle \cdots\right\rangle\right\rangle^{0}$.

For $X=\bp^N$, we take $T_j=\omega^j$, $T^j=\omega^{N-j}$.  Note that in this case the dimension constraint uniquely chooses a $T_j$ in each bracket so that the sum over $T_jT^j$ consists of a single term.\\

We prove Theorem~\ref{th:GWquasi} by induction.  First we calculate the genus zero 1-point and 2-point functions which are required as initial conditions in the induction.

{\em Genus zero one-point and two-point descendant invariants.}  

The 1-point genus zero stationary invariants can be determined via the genus zero topological recursion relation (\ref{eq:TRR0}).  They are
\[ \langle\tau_{(N+1)d-2}(pt)\rangle^0=\frac{1}{d!^{N+1}}\]
or equivalently 
\[ \left\langle\tau_m(pt)\right\rangle^{0}=\frac{1}{ c_{N+1}(m)}\cdot\frac{1}{d^2},\quad d=\frac{m+2}{N+1}\]
for $m+2\equiv 0\bmod{N+1}$ and 0 otherwise.\\

For the 2-point invariants we need to also allow non-stationary terms.  

\begin{lemma}  \label{th:2primdes}
The genus zero two-point stationary descendant invariants are given by
\[\langle\tau_{m_1}(pt)\tau_{m_2}(pt)\rangle=\frac{1}{c_{N+1}(m_1)c_{N+1}(m_2)}\cdot \frac{1}{d},\quad d=1+\frac{m_1+m_2}{N+1}\] 
and primary and stationary insertions together are given by
\[\langle\tau_{m}(pt)\tau_0(\omega^k)\rangle=\frac{1}{c_{N+1}(m)}\cdot \frac{1}{d}=\frac{1}{c_{N+1}(m+1)},\quad d=\frac{m+k+1}{N+1}\]
for $m_1+m_2\equiv 0\bmod{N+1}$ and $m+k+1\equiv 0\bmod{N+1}$ and they vanish otherwise.
\end{lemma}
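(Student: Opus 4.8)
The plan is to compute both families of two-point invariants directly from the genus zero topological recursion relation (\ref{eq:TRR0}), using the already-established one-point values $\langle\tau_m(pt)\rangle^0 = 1/(c_{N+1}(m)\,d^2)$ as the base case of an induction on $m_1$ (equivalently, on the degree $d$). Concretely, I would apply (\ref{eq:TRR0}) with $\gamma_1=\gamma_2=\mathrm{pt}=\omega^N$ and $\gamma_3$ chosen to make the bracket nonzero, reading it at the level of honest correlators rather than the generating series $\langle\langle\cdot\rangle\rangle$; because the target is $\bp^N$, the dimension constraint forces a unique $T_j=\omega^j$, $T^j=\omega^{N-j}$ in the splitting sum, so the right-hand side collapses to a single term. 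This expresses $\langle\tau_{m_1+1}(pt)\tau_{m_2}(pt)\rangle$ in terms of a product of lower invariants: one one-point invariant and one two-point invariant with the first descendant index lowered by $m_1$, i.e. a $\tau_0(\omega^j)$ together with the appropriate $\tau_{m_2}$ or $\tau_{m'}(pt)$ factor. The mixed invariant $\langle\tau_m(pt)\tau_0(\omega^k)\rangle$ is needed as an intermediate quantity, and it should be handled in the same induction — indeed these mixed two-point functions are exactly the ones appearing on the right of the TRR when one factor carries the non-point class $\omega^j$.

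First I would pin down the mixed invariant $\langle\tau_m(pt)\tau_0(\omega^k)\rangle$. For $m=0$ this is a primary two-point invariant of $\bp^N$ which one knows classically (it equals $1$ in the relevant degree, matching $1/c_{N+1}(1)=1$). For $m\geq 1$ one applies (\ref{eq:TRR0}) with $\tau_{m_1+1}=\tau_m(pt)$, $\tau_{m_2}=\tau_0(\omega^k)$, and a dummy $\tau_0(1)$ or $\tau_0(\omega^N)$ in the third slot so that the left side is a genuine two-point function after the string/divisor reduction of the spurious insertion; alternatively one can take the third insertion to be $\tau_0(\mathrm{pt})$ and use the string equation to strip it. Either way the right side is a product of a one-point invariant and a mixed two-point invariant of strictly smaller first index, so the claimed closed form $1/c_{N+1}(m+1)$ follows by checking that the recursion $c_{N+1}(m+1)=\lceil (m+1)/(N+1)\rceil\, c_{N+1}(m)$ is compatible with the degree arithmetic $d=(m+k+1)/(N+1)$ — this is where the ceiling function enters, since lowering a point insertion $\tau_m(pt)$ by one lowers the degree exactly when $N+1\mid m+2$. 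Having the mixed case, the pure stationary case $\langle\tau_{m_1}(pt)\tau_{m_2}(pt)\rangle$ drops out of (\ref{eq:TRR0}) by the same collapse: the unique splitting term pairs $\langle\tau_{m_1-1}(pt)\,\omega^{N}\rangle$... wait — the correct pairing is $\langle\langle\tau_{m_1}(pt)\,T_j\rangle\rangle^0\langle\langle T^j\tau_{m_2}(pt)\tau_{m_3}(pt)\rangle\rangle^0$, and with only two real point insertions the three-point bracket must be handled by the same device of an auxiliary insertion; after reducing, the two-point identity $\langle\tau_{m_1}(pt)\tau_{m_2}(pt)\rangle\cdot d = \langle\tau_{m_1+1}(pt)\tau_{m_2}(pt)\rangle\cdot d'$-type relation emerges and is solved by $1/(c_{N+1}(m_1)c_{N+1}(m_2)d)$.

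I would organize the verification as a single induction on $m_1+m_2$, treating all mixed and pure two-point invariants simultaneously, with the one-point formula as the seed. The vanishing statements (invariants vanish unless $m_1+m_2\equiv 0$, resp. $m+k+1\equiv 0$, mod $N+1$) are immediate from the dimension constraint $\dim\overline{\modm}^0_n(\bp^N,d)=(N+1)d+N-3+n$ together with the degree-of-class bookkeeping, so they need no induction. The main obstacle I anticipate is purely bookkeeping: getting the three-point correlator on the right-hand side of (\ref{eq:TRR0}) correctly reduced to two-point data without circularity — one must be careful that the auxiliary insertion ($\tau_0(1)$ via the string equation, or $\tau_0(\omega)$ via the divisor equation, or simply a careful direct three-point computation) does not secretly invoke a two-point invariant of the same or larger total index. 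A clean way to sidestep this is to take the third slot to be $\tau_0(\omega^{N})=\tau_0(pt)$ and use the string equation on $\tau_0(1)$ only after it genuinely appears, or to note that the relevant three-point genus zero invariant of $\bp^N$ with a point insertion is itself determined by the one-point formula via one more application of (\ref{eq:TRR0}). Once that reduction is pinned down, the remaining work is the elementary check that the proposed closed forms satisfy the resulting recursion, which is exactly the defining recursion for $c_{N+1}$ combined with the linear-in-$d$ behaviour.
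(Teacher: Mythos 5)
Your overall strategy --- seed the computation with the one-point formula, reduce via the genus zero TRR (\ref{eq:TRR0}) with the unique $T_j=\omega^j$ forced by dimensions, and treat the mixed invariants $\langle\tau_m(pt)\tau_0(\omega^k)\rangle$ inside the same induction --- is exactly the paper's. But there is a genuine gap at the step you yourself flag as the main obstacle, and it is not mere bookkeeping. The TRR needs three insertions on the left, so for a two-point invariant you must first create a third insertion and then remove the extra classes that reappear in the factors on the right. Of the three mechanisms you float, two do not work: the string equation strips $\tau_0(1)$, not $\tau_0(pt)=\tau_0(\omega^N)$, so ``take the third insertion to be $\tau_0(pt)$ and use the string equation to strip it'' is unavailable for $N>1$; and inserting $\tau_0(1)$ does not return a multiple of the original correlator --- the string equation turns it into a sum with lowered indices, and the dimension constraint shifts so that $\langle\tau_0(1)\tau_{m_1}(pt)\tau_{m_2}(pt)\rangle$ actually vanishes in precisely the degrees where $\langle\tau_{m_1}(pt)\tau_{m_2}(pt)\rangle$ does not. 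The only clean device --- the one the paper uses at every stage --- is to insert $\tau_0(\omega)$ via the divisor equation: since every insertion present is stationary or primary, the correction terms $\tau_{m_i-1}(\gamma_i\cup\omega)$ in (\ref{GWdivisor}) vanish (because $\omega^N\cup\omega=0$, resp.\ $\tau_{-1}=0$), so one gains exactly the factor $1/d$ appearing in the answer; and the divisor equation must be applied a second time, in reverse, to the three-point factor $\langle\tau_0(\omega^{N-k})\tau_{m_2}(pt)\tau_0(\omega)\rangle_{d_2}$ that the TRR produces in the pure stationary case, to turn it back into a two-point invariant.

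Relatedly, your induction needs an actual terminal case beyond the one-point formula being a ``seed.'' The reduction for the mixed invariants runs $\langle\tau_m(pt)\tau_0(\omega^k)\rangle\to\langle\tau_{m-1}(pt)\tau_0(\omega^{k+1})\rangle$, so it bottoms out at $k=N$, and that case requires a separate computation: there the TRR produces the factor $\langle\tau_{m-1}(pt)\tau_0(1)\rangle_{d-1}$, which the string equation (now legitimately, since the class is $1$) converts into the one-point invariant $\langle\tau_{m-2}(pt)\rangle_{d-1}$. Without pinning down both the $\tau_0(\omega)$/divisor-equation device and this $k=N$ base case, the recursion you describe cannot be run; once they are in place, the remainder is, as you say, the elementary check against $c_{N+1}(m)=\left\lceil m/(N+1)\right\rceil c_{N+1}(m-1)$ and the degree arithmetic.
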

Note that genus zero two-point primary invariants vanish because the stable maps necessarily have degree zero and there are no genus zero 2-pointed degree zero stable maps.
\begin{proof}
The proof is by induction.  We first prove the case involving primary insertions by induction on $k$.  The initial case in the induction is the stationary case $k=N$, so $m\equiv 0\bmod{N+1}$.  Note that in each case below the degree is positive and hence we can apply the divisor equation.
\begin{align*}
\langle\tau_{m}(pt)\tau_0(\omega^N)\rangle_d=\langle\tau_{m}(pt)\tau_0(pt)\rangle_d&=\frac{1}{d}\langle\tau_{m}(pt)\tau_0(pt)\tau_0(\omega)\rangle_d&{\rm divisor\ equation}\\
&=\frac{1}{d}\langle\tau_{m-1}(pt)\tau_0(1)\rangle_{d-1}\langle\tau_0(pt)\tau_0(pt)\tau_0(\omega)\rangle_1&{\rm TRR\ }(\ref{eq:TRR0})\\
&=\frac{1}{d}\langle\tau_{m-1}(pt)\tau_0(1)\rangle_{d-1}&\\
&=\frac{1}{d}\langle\tau_{m-2}(pt)\rangle_{d-1}&{\rm string\ equation}\\
&=\frac{1}{d}\cdot\frac{1}{c_{N+1}(m-2)}\cdot\frac{1}{(d-1)^2}&\\
&=\frac{1}{d}\cdot\frac{1}{c_{N+1}(m)}&\hspace{-2cm}\frac{c_{N+1}(m)}{c_{N+1}(m-2)}=\left\lceil\frac{m}{N+1}\right\rceil\left\lceil\frac{m-1}{N+1}\right\rceil 
\end{align*}
For $k<N$ and $m+k+1\equiv 0\bmod{N+1}$,
\begin{align*}
\langle\tau_{m}(pt)\tau_0(\omega^k)\rangle&=\frac{1}{d}\langle\tau_{m}(pt)\tau_0(\omega^k)\tau_0(\omega)\rangle&{\rm divisor\ equation}\\
&=\frac{1}{d}\langle\tau_{m-1}(pt)\tau_0(\omega^{k+1})\rangle\langle\tau_0(\omega^{N-k-1})\tau_0(\omega^k)\tau_0(\omega)\rangle&{\rm TRR\ }(\ref{eq:TRR0})\\
&=\frac{1}{d}\langle\tau_{m-1}(pt)\tau_0(\omega^{k+1})\rangle&\\
&=\frac{1}{d}\cdot\frac{1}{c_{N+1}(m-1)}\cdot\frac{1}{d}&{\rm inductive\ hypothesis}\\
&=\frac{1}{d}\cdot\frac{1}{c_{N+1}(m)}&\hspace{-2cm}c_{N+1}(m)=\left\lceil\frac{m}{N+1}\right\rceil c_{N+1}(m-1)
\end{align*}
as required.  The stationary case:
\begin{align*}
\langle\tau_{m_1}(pt)\tau_{m_2}(pt)\rangle_d&=\frac{1}{d}\langle\tau_{m_1}(pt)\tau_{m_2}(pt)\tau_0(\omega)\rangle_d&{\rm divisor\ equation}\\
&=\frac{1}{d}\langle\tau_{m_1-1}(pt)\tau_0(\omega^k)\rangle_{d_1}\langle\tau_0(\omega^{N-k})\tau_{m_2}(pt)\tau_0(\omega)\rangle_{d_2}&{\rm TRR\ }(\ref{eq:TRR0})\\
&=\frac{d_2}{d}\langle\tau_{m_1-1}(pt)\tau_0(\omega^k)\rangle_{d_1}\langle\tau_0(\omega^{N-k})\tau_{m_2}(pt)\rangle_{d_2}&{\rm divisor\ equation}\\
&=\frac{d_2}{d}\cdot\frac{1}{c_{N+1}(m_1-1)}\cdot\frac{1}{d_1}\cdot\frac{1}{c_{N+1}(m_2)}\cdot\frac{1}{d_2}&{\rm inductive\ hypothesis}\\
&=\frac{1}{d}\cdot\frac{1}{c_{N+1}(m_1)}\cdot\frac{1}{c_{N+1}(m_2)}&\frac{c_{N+1}(m_1)}{c_{N+1}(m_1-1)}=\left\lceil\frac{m_1}{N+1}\right\rceil 
\end{align*}
As already mentioned, in each application of the TRR, the choice of $\omega^j$ is uniquely determined by dimension constraints.
\end{proof}

The first part of Theorem~\ref{th:GWquasi} is an immediate consequence of the following more general statement in which the stationary insertions vary while all others are held fixed. 
\begin{theorem} \label{th:GWquasi1}For $2g-2+n+s>0$ and $m_i\geq 3g-1$, $i=1,...,n$ define
\begin{equation}\label{eq:quasi1}
q^{(N)}_{g,\vec{\kappa}}(m_1,\dots,m_n):=\left\langle \prod_{j=1}^s\tau_{\kappa_j}(\omega^{k_j})\prod_{i=1}^n\tau_{m_i}(pt)\right\rangle^g\cdot\prod_{i=1}^nc_N(m_i)
\end{equation} 
for $\vec{\kappa}=(\kappa_1,k_1,\kappa_2,...,k_s)$.  Then $q^{(N)}_{g,\vec{\kappa}}(m_1,\dots,m_n)$ is a degree $3g-3+n+s$ symmetric quasi-polynomial in $m_i$. 
\end{theorem}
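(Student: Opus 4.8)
The plan is to prove Theorem~\ref{th:GWquasi1} by a double induction, the outer variable being the genus $g$ and the inner one the number $n$ of stationary insertions; the genus‑zero case is disposed of first and separately, as a sub‑induction on the total descendant weight $\sum_i m_i+\sum_j\kappa_j$. Throughout, ``quasi‑polynomial of degree $D$'' is read as ``of degree at most $D$'', which is what is needed to deduce the first part of Theorem~\ref{th:GWquasi} (the sharp top coefficients being a separate matter). Symmetry of $q^{(N)}_{g,\vec\kappa}$ in $m_1,\dots,m_n$ is automatic, since the correlator $\langle\cdots\rangle^g$ and the normalization $\prod_i c_{N+1}(m_i)$ are each symmetric in the stationary marked points, so it suffices to prove quasi‑polynomiality together with the degree bound. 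The key structural input is the rigidity of the dimension constraint on $\bp^N$: in every summand of \eqref{eq:TRR0} and \eqref{eq:TRRg} the dual classes $T_k,T^k$ are uniquely pinned down, the degree $d$ splits in a forced way among the factors, and — crucially — most summands and most ways of distributing the spectator insertions are forced to vanish.

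For $g=0$ I would induct on the total descendant weight. The base is weight $0$ (a purely primary correlator, hence a constant, of degree $0\le n+s-3$ since a nonzero genus‑zero primary $(n+s)$‑point invariant needs $n+s\ge3$) together with the one‑ and two‑point evaluations already in hand: $\langle\tau_m(pt)\rangle^0 c_{N+1}(m)=1/d^2$ and the formulas of Lemma~\ref{th:2primdes}, visibly quasi‑polynomial in the $m_i$ of the stated (possibly negative) degrees, the remaining two‑point descendants with $\omega$‑power classes coming out the same way from \eqref{eq:TRR0} and the string and divisor equations. If the weight is positive and the correlator has at least three insertions, pick one with positive descendant index and apply \eqref{eq:TRR0} to it, so that the right‑hand side is a finite sum of products of correlators of strictly smaller weight; each child factor, after restoring the matching $c_{N+1}$‑factors, is a quasi‑polynomial by the sub‑induction, and since products and finite sums of quasi‑polynomials are again quasi‑polynomials (with degrees adding under products), only the degree count remains. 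The one point to watch is that peeling a unit off an index $m_i$ changes the normalization $c_{N+1}(m_i)$ by the factor $\lceil m_i/(N+1)\rceil$, of degree one; this extra degree‑one factor is precisely what makes the degrees of the two children add up to $n+s-3$ rather than fall short, and the verification is the same sort of bookkeeping as in the proof of Lemma~\ref{th:2primdes}.

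For $g\ge1$ the inner induction is on $n$, with base case $n=0$ (a purely primary genus‑$g$ correlator, a constant). For $n\ge1$ take a stationary insertion $\tau_{m_1}(pt)$ and write $m_1=m+3g-1$ with $m\ge0$ — this is exactly where $m_i\ge3g-1$ is used — and apply \eqref{eq:TRRg} to it. Expanding the double brackets against the remaining insertions distributes them over a genus‑$g$ factor $\langle\tau_\alpha(\omega^k)\cdots\rangle^g$ (with $\alpha\le3g-2$, so the stationary insertion has become a primary one and this factor now carries at most $n-1$ stationary insertions, which is what lets the induction hypothesis apply) and a chain of genus‑zero factors coming from $\langle\langle T^k\tau_m(pt)\rangle\rangle_{(\beta)}$, into one of which the shifted insertion $\tau_{m_{k'}+m}(pt)$ with bounded $m_{k'}\le\beta+1-k'$ lands. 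By the inner induction the normalized genus‑$g$ factor is a quasi‑polynomial in the stationary variables it retains, and by the genus‑zero case (and Lemma~\ref{th:2primdes} for two‑point factors) so is each normalized genus‑zero chain factor, so the whole right‑hand side is a quasi‑polynomial. For the degree one redistributes the normalization as $c_{N+1}(m+3g-1)=\bigl(c_{N+1}(m+3g-1)/c_{N+1}(m_{k'}+m)\bigr)\cdot c_{N+1}(m_{k'}+m)$, the first factor being a quasi‑polynomial in $m$ of degree $3g-1-m_{k'}$, and assigns $c_{N+1}(m_{k'}+m)$ to the genus‑zero factor holding $\tau_{m_{k'}+m}(pt)$; tracking how the dimension constraint of $\bp^N$ limits the nonzero summands and the admissible spectator distributions, one checks that in each surviving contribution the total degree is at most $3g-3+n+s$.

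The main obstacle is precisely this last degree count. Abstractly the recursions only propagate quasi‑polynomiality; the content is to show that the positive degree $3g-1-m_{k'}$ contributed by the normalization ratio $c_{N+1}(m+3g-1)/c_{N+1}(m_{k'}+m)$ is always exactly absorbed by the negative ``degree'' of the genus‑zero chain pieces carrying $\tau_{m_{k'}+m}(pt)$ — in the extreme case the chain collapses to the single two‑point factor $\langle\tau_0(\omega^{j})\tau_{m_1-1}(pt)\rangle^0=1/c_{N+1}(m_1)$, which simply cancels the normalization — uniformly over all summands of \eqref{eq:TRRg} and all spectator distributions permitted by the dimension constraint. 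This needs an honest accounting of the normalized genus‑zero two‑point function $\langle\tau_a(pt)\tau_b(pt)\rangle^0\,c_{N+1}(a)c_{N+1}(b)=(N+1)/(N+1+a+b)$ and of the bounded combinatorial sums defining $\langle\langle\,\cdot\,\rangle\rangle_{(\beta)}$, and it is the only step that is not a formal consequence of the recursion relations together with the closure properties of quasi‑polynomials.
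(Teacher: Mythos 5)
Your proposal follows essentially the same route as the paper: induction using the genus-zero TRR (\ref{eq:TRR0}) and the higher-genus TRR (\ref{eq:TRRg}) applied to a stationary insertion with $m_1\ge 3g-1$, base cases given by the purely primary (constant) correlators and the genus-zero one- and two-point functions of Lemma~\ref{th:2primdes}, and a degree count driven by the factors $\lceil m_1/(N+1)\rceil$ coming from $c_{N+1}(m_1)/c_{N+1}(m_1-1)$, with the two-point factor $\langle\tau_0(\omega^k)\tau_{m_1-1}(pt)\rangle^0=1/c_{N+1}(m_1)$ playing exactly the cancelling role you describe. The degree bookkeeping you defer is the computation culminating in (\ref{eq:degpol}) in the paper, and your choice to organise the genus-zero induction by total descendant weight rather than by the pair (number of insertions, number of stationary descendants) is an inessential variation.
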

\begin{proof}
When $n=0$ 
\[ q^{(N)}_{g,\vec{\kappa}}:=\left\langle \prod_{j=1}^s\tau_{\kappa_j}(\omega^{k_j})\right\rangle^g\]
is a constant and there is nothing to prove.  This is the initial case in an induction on the genus, the number of insertions and the number of stationary insertions, respectively.  The main tool is the topological recursion relations for Gromov-Witten invariants.  

The genus zero 1-point and 2-point functions described above allow us to define
\[ p^0_2(m_1,m_2):=\frac{N+1}{m_1+m_2+N+1},\quad p^0_1(m):=\frac{(N+1)^2}{(m+2)^2}.\]
Notice that these also have the form (\ref{eq:quasi1}) if we allow degree -2 and -1 "polynomials".  

If there are no descendant stationary terms, i.e. $m_i=0$ for all $i$ then as already mentioned there is nothing to prove since both sides are constant.  So we may assume that $m_1$, say, is non-zero.  

{\bf Genus 0.}  In the genus 0 case $n+s\geq 3$ so we write
\[ \prod_{j=1}^s\tau_{\kappa_j}(\omega^{k_j})\prod_{i=1}^n\tau_{m_i}(pt)=\tau_{m_1}(pt)\tau_{a_2}(\omega^{b_2})\tau_{a_3}(\omega^{b_3})\tau_U\tau_V\]
where $\tau_{a_i}(\omega^{b_i})$ are two factors chosen from $\prod_{j=1}^s\tau_0(\omega^{k_j})\prod_{i\neq 1}\tau_{m_i}(pt)$, and $\tau_U\tau_V$ contains all other factors.

Apply the genus zero TRR (\ref{eq:TRR0})
\begin{equation}  \label{eq:mainduc}
\left\langle \prod_{j=1}^s\tau_{\kappa_j}(\omega^{k_j})\prod_{i=1}^n\tau_{m_i}(pt)\right\rangle
=\sum_{U}\left\langle\tau_0(\omega^{k_U})\tau_{m_1-1}(pt)\tau_U\right\rangle\left\langle\tau_0(\omega^{N-k_U})\tau_{a_2}(\omega^{b_2})\tau_{a_3}(\omega^{b_3})\tau_V\right\rangle
\end{equation}
where, as usual, the choice of $U$ (and hence $V$) uniquely determines $k_U$.  

Each term in the right hand side of (\ref{eq:mainduc}) is simpler in the induction---either there are fewer than $n+s$ insertions in each factor, or there is the term
\begin{equation} \label{eq:2pterm}
\left\langle\tau_0(\omega^k)\tau_{m_1-1}(pt)\right\rangle\left\langle\tau_0(\omega^{N-k})\prod_{j=1}^s\tau_{\kappa_j}(\omega^{k_j})\prod_{i\neq 1}\tau_{m_i}(pt)\right\rangle
\end{equation}
which has  $n+s$ insertions in the second factor, though with only $n-1$ stationary descendants.  

The initial cases consist either of no stationary descendants, where the theorem trivially holds, or the genus zero two-point function whose formula is given in Lemma~\ref{th:2primdes}.

Hence, by induction we can assume that each term is of the form (\ref{eq:quasi1}), where we allow the degree -2 and -1 "polynomials" discussed above.  

By the inductive assumption, any summand of (\ref{eq:mainduc}) satisfies
\[\left\langle\tau_0(\omega^{k_U})\tau_{m_1-1}(pt)\tau_U\right\rangle\left\langle\tau_0(\omega^{N-k_U})\tau_{a_2}(\omega^{b_2})\tau_{a_3}(\omega^{b_3})\tau_V\right\rangle=
\prod_{i=1}^n\frac{1}{c_{N+1}(m_i)} \cdot\left\lceil\frac{m_1}{N+1}\right\rceil q^0_{\vec{\kappa_1},|I_1|}(m_{I_1})q^0_{\vec{\kappa_2},|I_2|}(m_{I_2})\]
where $\#\vec{\kappa_1}+\#\vec{\kappa_2}=s+2$ (for $\#\vec{\kappa_i}$ (half) the number of components of $\vec{\kappa_i}$), $I_1\sqcup I_2=\{1,...,n\}$ and we have used $1/c_{N+1}(m_1-1)=\left\lceil m_1/(N+1)\right\rceil/c_{N+1}(m_1)$.  

If neither factor is a genus zero two-point function then the three factors $\left\lceil\frac{m_1}{N+1}\right\rceil q^0_{\vec{\kappa_1},|I_1|}(m_{I_1})q^0_{\vec{\kappa_2},|I_2|}(m_{I_2})$ are quasi-polynomials of respective degrees 1, $|I_1|+\#\vec{\kappa_1}-3$, $|I_2|+\#\vec{\kappa_2}-3$ so their product is a quasi-polynomial of degree 
\[1+|I_1|+\#\vec{\kappa_1}-3+|I_2|+\#\vec{\kappa_2}-3=n+s-3\]
as required.

The summand (\ref{eq:2pterm}) requires special consideration.  Apply Lemma~\ref{th:2primdes} to the two-point factor $\left\langle\tau_0(\omega^k)\tau_{m_1-1}(pt)\right\rangle$ so $d=\frac{m_1+k}{N+1}=\left\lceil\frac{m_1}{N+1}\right\rceil$ and
\[\left\langle\tau_0(\omega^k)\tau_{m_1-1}(pt)\right\rangle=
\frac{1}{c_{N+1}(m_1-1)}\cdot \frac{1}{d}=\frac{\left\lceil\frac{m_1}{N+1}\right\rceil}{c_{N+1}(m_1)}\cdot \frac{1}{d}=
\frac{1}{c_{N+1}(m_1)}.\]
The factor $\left\langle\tau_0(\omega^{N-k})\prod_{j=1}^s\tau_{\kappa_j}(\omega^{k_j})\prod_{i\neq 1}\tau_{m_i}(pt)\right\rangle=\prod_{i\neq 1}\frac{1}{c_{N+1}(m_i)}\times$ degree $n+s-3$ quasi-polynomial so the summand has the required form and the theorem is proven for genus 0.

{\bf Genus $\bf g>0$.}
We assume $m_1\geq 3g-1$ and write 
\[ \prod_{j=1}^s\tau_{\kappa_j}(\omega^{k_j})\prod_{i=1}^n\tau_{m_i}(pt)=\tau_{m_1}(pt)\tau_U\tau_V\]
where we will sum over all factorisations $\tau_U\tau_V=\prod_{j=1}^s\tau_{\kappa_j}(\omega^{k_j})\prod_{i\neq 1}\tau_{m_i}(pt)$.

Apply the genus $g$ TRR (\ref{eq:TRRg})
\begin{equation} \label{eq:mainduc1}
\left\langle \prod_{j=1}^s\tau_0(\omega^{k_j})\prod_{i=1}^n\tau_{m_i}(pt)\right\rangle^g
=\sum_{\alpha+\beta=3g-2}\sum_{U}\left\langle\tau_0(\omega^{k_U})\tau_{m_1+1-3g}(pt)\tau_U\right\rangle_{(\beta)}\left\langle\tau_{\alpha}(\omega^{N-k_U})\tau_V\right\rangle^g
\end{equation}
Each term in the right hand side of (\ref{eq:mainduc1}) is simpler in the induction---either it is genus 0 or there are fewer than $n+s$ insertions in each factor, or there are fewer than $n$ descendant insertions.

The term $\left\langle\tau_0(\omega^{k_U})\tau_{m_1+1-3g}(pt)\tau_U\right\rangle_{(\beta)}$ consists of only genus zero invariants.  It contains a 2-point term involving $\tau_{m_1+1-3g+\beta}(pt)$ which determines the degree of the following quasi-polynomial----pull out the quotients of $c_{N+1}(m_1)$, as in the genus zero argument above, to get a quasi-polynomial $q_{\vec{\kappa_1,(\beta)},|I_1|}(m_{I_1})$ of degree $-3+|I_1|+\#\vec{\kappa_1}+1-3g+\beta+1\leq -3+|I_1|+\#\vec{\kappa_1}$ since $\beta\leq 3g-2$.

If all $m_i\geq 3g-1$, by induction, each summand $\left\langle\tau_0(\omega^{k_U})\tau_{m_1+1-3g}(pt)\tau_U\right\rangle_{(\beta)}\left\langle\tau_{\alpha}(\omega^{N-k_U})\tau_V\right\rangle^g$ of (\ref{eq:mainduc1}) factorises into 
\[\prod_{i=1}^n\frac{1}{c_{N+1}(m_i)} \cdot\left\lceil\frac{m_1}{N+1}\right\rceil q^{(N)}_{0,\vec{\kappa}_1,(\beta)}(m_{I_1})q^{(N)}_{g,\vec{\kappa}_2}(m_{I_2})\]
and $\left\lceil\frac{m_1}{N+1}\right\rceil q^{(N)}_{0,\vec{\kappa}_1,(\beta)}(m_{I_1})q^{(N)}_{g,\vec{\kappa}_2}(m_{I_2})$ is a quasi-polynomial of degree at most
\begin{equation}  \label{eq:degpol}
1+(-3+|I_1|+\#\vec{\kappa_1})+(3g-3+|I_2|+\#\vec{\kappa_2})=3g-3+n+s
\end{equation}
since $|I_1|+|I_2|=n$ and $\#\vec{\kappa_1}+\#\vec{\kappa_2}=s+2$.  This also includes the special case involving the genus zero 2-point factor $\left\langle\tau_0(\omega^k)\tau_{m_1-1}(pt)\right\rangle^0=1/c_{N+1}(m_i)$.

Hence the theorem is proven.
\end{proof}
{\em Remarks.}  
1. For $g=1$ a different genus one topological recursion relation 
\[
\left\langle\left\langle \tau_{m_1+1}(\alpha_1)\right\rangle\right\rangle^1=\left\langle\left\langle \tau_{m_1}(\alpha_1)T_j\right\rangle\right\rangle^0\left\langle\left\langle T^j\right\rangle\right\rangle^1 +\frac{1}{12}\left\langle\left\langle T_jT^j \tau_{m_1}(\alpha_1)\right\rangle\right\rangle^0.
\]
can be used to drop the restriction $m_i\geq 3g-1$.  Getzler's genus two topological recursion relation \cite{GetTop} can be used to relax the restriction to $m_i\geq 2$ for $g=2$.

2. Theorem~\ref{th:GWquasi1} is essentially due to the topological recursion relations which are satisfied in general. Thus it might be possible to generalise the theorem to other target spaces, such as Fano manifolds.  The proof can break down at the 2-point function part of the induction.  For example it breaks down for non-stationary Gromov-Witten invariants of $\bp^N$. The following example shows non-polynomial behaviour of the non-stationary invariants.

{\em Example.}  The non-stationary invariants are not quasi-polynomial in the descendant variables.  Consider the case of $\bp^1$ for simplicity.
\begin{align*}
\left\langle\tau_m(1)\tau_0(pt)\tau_0(pt)\right\rangle&=\left\langle\tau_{m-1}(1)\tau_0(1)\right\rangle\left\langle\tau_0(pt)\tau_0(pt)\tau_0(pt)\right\rangle&{\rm TRR}\\
&=\left\langle\tau_{m-2}(1)\right\rangle&{\rm string\ eq}\\
&=\frac{1}{d^2}\left(\left\langle\tau_{m-2}(1)\tau_0(pt)\tau_0(pt)\right\rangle-\left\langle\tau_{m-3}(pt)\right\rangle-\left\langle\tau_{m-3}(pt)\tau_0(pt)\right\rangle\right)&{\rm divisor\ eq}\\
&=\frac{1}{d^2}\left(\left\langle\tau_{m-2}(1)\tau_0(pt)\tau_0(pt)\right\rangle-\frac{1}{c_2(m-1)}-\frac{d-1}{c_2(m-1)}\right)&
\end{align*}
for $d=\left\lceil m/2\right\rceil$ and we have used the 1-point and 2-point stationary formulae.  Define 
\[f(m):=\left\langle\tau_m(1)\tau_0(pt)\tau_0(pt)\right\rangle\cdot c_2(m)\]
then we have proven
\[ f(m)=(1-1/d)f(m-2)-1,\quad d=\left\lceil m/2\right\rceil\]
and clearly $f(m)$ is not quasi-polynomial in $m$.

\subsection{The point $\bp^0$}
The dimension constraint in the $N=0$ case is
\begin{equation}  \label{eq:N=0dim}
3g-3+n+d=\sum_{i=1}^nm_i
\end{equation}
which does not correspond directly to Gromov-Witten invariants with target $\bp^0=\{ pt\}$.  This is because all maps to a point have degree $d=0$ or equivalently the Gromov-Witten invariants of a point, $\left\langle\prod_{i=1}^n\tau_{m_i}\right\rangle_g$, are non-trivial only when $3g-3+n=\sum_{i=1}^nm_i$ which constrains $\{m_i\}$.  This reflects the fact that $\bp^N$ is Fano when $N>0$ and it is Calabi-Yau when $N=0$.

Nevertheless, the proof of Theorem~\ref{th:GWquasi1} still applies to $N=0$ and generates a family of {\em polynomials} (since the $\bmod{\ N+1}$ dependence is no longer a restriction when $N=0$) in unconstrained variables $m_i$.  One introduces a non-trivial degree $d$ in this situation by allowing $d$ extra unlabeled points on a curve.

\begin{defn}
For $d\geq 0$, define
\[ \left\langle\prod_{i=1}^n\tau_{m_i}\right\rangle^g_d
:=\frac{1}{d!}\int_{\overline{\modm}^g_{n+d}} \prod_{i=1}^n\psi_i^{m_i}
=\left\langle\prod_{i=1}^n\tau_{m_i}\cdot\exp\tau_0\right\rangle^g
=\left.\left\langle\left\langle\prod_{i=1}^n\tau_{m_i}\right\rangle\right\rangle^g\right|_{\vec{t}=(1,0,0,...)}.\]
\end{defn}
In particular, $\left\langle\prod_{i=1}^n\tau_{m_i}\right\rangle^g_d$ is non-trivial only when the dimension constraint (\ref{eq:N=0dim}) is satisfied, and as expected $\left\langle\prod_{i=1}^n\tau_{m_i}\right\rangle^g_{d=0}=\left\langle\prod_{i=1}^n\tau_{m_i}\right\rangle^g$.  The generating function is
\[
\left\langle\left\langle\prod_{i=1}^n\tau_{m_i}\right\rangle\right\rangle^g_d:=
\left\langle\prod_{i=1}^n\tau_{m_i}\cdot\exp\tau_0\cdot\exp\sum_{k}t_k\tau_k\right\rangle^g
=\left\langle\prod_{i=1}^n\tau_{m_i}\cdot\exp\sum_{k}\tilde{t}_k\tau_k\right\rangle^g,\quad \tilde{t}_k=t_k+\delta^1_k.
\]
 
\begin{lemma}
$\left\langle\prod_{i=1}^n\tau_{m_i}\right\rangle^g_d$ satisfies the topological recursion relations (\ref{eq:TRR0}) and (\ref{eq:TRRg}).
\end{lemma}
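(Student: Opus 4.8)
The key observation is that (\ref{eq:TRR0}) and (\ref{eq:TRRg}) are not merely relations among individual invariants but identities of the formal generating functions $\left\langle\left\langle\cdots\right\rangle\right\rangle^g$, valid identically in the time variables $t_k$ (for the target $\bp^0$ we suppress the upper index $r$, since $H^*(\bp^0)=\bq\cdot\tau_0$, so $T_k=T^k=\tau_0$ and the sum over $j$ in (\ref{eq:TRR0})--(\ref{eq:TRRg}) is a single term). Indeed (\ref{eq:TRR0}) is the generating-function incarnation of the relation $\psi_1=\sum(\text{boundary})$ on $\overline{\modm}_{0,n}$, pulled back along forgetful maps so as to accommodate arbitrarily many extra $\tau_k$-insertions; (\ref{eq:TRRg}) likewise comes from the universal relations of \cite{EXiQua,GatGro,LiuQua} on $\overline{\modm}_{g,n}$, and the auxiliary bracket $\left\langle\left\langle\cdots\right\rangle\right\rangle_{(\beta)}$ is, by its very definition, a fixed universal polynomial in the genus-zero double brackets $\left\langle\left\langle\cdots\right\rangle\right\rangle^0$. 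So all of these are equalities in a ring of formal power series in $(t_0,t_1,\dots)$.

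Next I would recall, from the definition, that
\[
\left\langle\left\langle\prod_{i=1}^n\tau_{m_i}\right\rangle\right\rangle^g_d=\left.\left\langle\left\langle\prod_{i=1}^n\tau_{m_i}\right\rangle\right\rangle^g\right|_{t_k\,\mapsto\,t_k+\delta^1_k},
\]
that is, $\left\langle\left\langle\cdots\right\rangle\right\rangle^g_d$ is obtained from $\left\langle\left\langle\cdots\right\rangle\right\rangle^g$ by the substitution shifting the variable dual to $\tau_0$ by $1$ (equivalently, by inserting $\exp\tau_0$). The one point requiring care is that this substitution really is well defined on the relevant power-series ring: shifting a variable by a constant can a priori produce infinite coefficients. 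Here it cannot, precisely because of the dimension constraint (\ref{eq:N=0dim}): the coefficient of any fixed monomial $\prod_k t_k^{a_k}$ in the substituted series is, up to a binomial coefficient, a single intersection number $\int_{\overline{\modm}_{g,M}}\prod\psi_i^{m_i}$ with $M$ forced by $3g-3+M=\sum m_i$, hence a finite sum. Thus the substitution is a well-defined $\bq$-algebra endomorphism of the power series in question.

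Applying this well-defined substitution to both sides of (\ref{eq:TRR0}) and of (\ref{eq:TRRg}) — which, being identities in the $t$-variables, are preserved — produces exactly the same relations with every $\left\langle\left\langle\cdots\right\rangle\right\rangle^g$ replaced by $\left\langle\left\langle\cdots\right\rangle\right\rangle^g_d$ and every $\left\langle\left\langle\cdots\right\rangle\right\rangle_{(\beta)}$ replaced by the corresponding $d$-twisted bracket, which is the assertion of the lemma. (Alternatively one can argue geometrically: write $\left\langle\prod\tau_{m_i}\right\rangle^g_d=\tfrac{1}{d!}\int_{\overline{\modm}_{g,n+d}}\prod\psi_i^{m_i}$, pull back the $\overline{\modm}$-relation, integrate, and sum over $d$; a separating boundary divisor distributes the $d$ unlabeled points between the two components, and the combinatorial factors $\tfrac{1}{d_1!}\tfrac{1}{d_2!}$ reassemble into the product on the right-hand side of the recursion.) The only real obstacle is the well-definedness of the shift, and that is exactly what the dimension constraint (\ref{eq:N=0dim}) guarantees.
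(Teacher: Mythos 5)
Your proposal is correct and follows essentially the same route as the paper: the paper likewise observes that the topological recursion relations hold as identities of the generating functions $\left\langle\left\langle\cdots\right\rangle\right\rangle^g$ for the point target (derived from the boundary-divisor expression for $\psi_i$ on $\overline{\modm}_{0,n}$ and the vanishing $\psi^{3g-1}=0$ on $\modm^g_1$), and then obtains the degree-$d$ version by the substitution $t_0\mapsto t_0+1$, i.e.\ inserting $\exp\tau_0$. Your added remark on why the shift is a well-defined operation on the power-series ring (finiteness via the dimension constraint) is a point the paper leaves implicit, but it does not change the argument.
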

\begin{proof}
This is immediate from the proofs of the topological recursion relations.   The topological recursion relations apply to the target $X=\{ pt\}$ and can be expressed using the generating function $\left\langle\left\langle\prod_{i=1}^n\tau_{m_i}\right\rangle\right\rangle^g$.  Simply substituting $t_0\mapsto t_0+1$ gives the result for the generating function $\left\langle\left\langle\prod_{i=1}^n\tau_{m_i}\right\rangle\right\rangle^g_d$ that includes degree.  We present the proof here.

For $I\subset\{1,...,n\}$, denote by $\overline{\modm}^g_I\cong\overline{\modm}^g_{|I|}$ having points labeled by the subset $I$.  For $I_1\sqcup I_2=\{1,...,n\}$ and $g_1+g_2=g$ define the boundary divisor
\[ D(g_1,I_1|g_2,I_2):=\overline{\modm}^{g_1}_{\{I_1,0\}}\times\overline{\modm}^{g_2}_{\{I_2,0\}}\hookrightarrow\overline{\modm}^{g}_n\]
obtained by gluing the points labeled by $\{0\}$ on each component to obtain a new stable curve.  Define $\widehat{D}(g_1,I_1|g_2,I_2)\in H^2(\overline{\modm}^{g}_n,\bq)$ to be the Poincare dual of $D(g_1,I_1|g_2,I_2)$.
Consider the forgetful map  $\pi_I:\overline{\modm}^g_n\to\overline{\modm}^g_I$ for $I\subset\{1,...,n\}$.  Then for $i\in I$, 
\begin{equation}  \label{eq:boundiv}
\psi_i=\pi_I^*\psi_i\ +\hspace{-1cm}\sum_{\begin{array}{c}I_1\sqcup I_2=\{1,...,n\}\\I_2\cap I=\{i\}\end{array}} \hspace{-1cm}\widehat{D}(g,I_1|0,I_2)
\end{equation}
which follows from the simplest case 
\[ \psi_j=\pi_I^*\psi_j+\widehat{[s_j]},\quad I=\{1,...,n-1\}\] 
and the identification of the image of the $j$th section $[s_j]$ with $D(g,I\backslash\{j\}|0,\{j,n\})$.  See, Getzler \cite{GetTop}.

Since $\psi_i=0$ on $\overline{\modm}^0_3$, the relation (\ref{eq:boundiv}) expresses $\psi_i$ over $\overline{\modm}^0_n$ as a sum of boundary divisors by setting $g=0$ and $|I|=3$ so that $\pi_I^*\psi_i=0$.  Multiply (\ref{eq:boundiv}) by $\prod\psi_i^{m_i}$ and integrate over $\overline{\modm}^g_n$ to get
 \[ \left\langle\left\langle \tau_{m_1+1} \tau_{m_2} \tau_{m_3} \right\rangle\right\rangle^0=\sum_j\left\langle\left\langle \tau_{m_1}\tau_0\right\rangle\right\rangle^0 \left\langle\left\langle \tau_0\tau_{m_2}\tau_{m_3}\right\rangle\right\rangle^0,
 \]
 and further set $\vec{t}=(1+t_0,t_1,t_2,...)$ which shows $\left\langle\prod_{i=1}^n\tau_{m_i}\right\rangle^0_d$ satisfies the topological recursion relation (\ref{eq:TRR0}) as required.
 
The proof that $\left\langle\prod_{i=1}^n\tau_{m_i}\right\rangle^g_d$ satisfies the genus $g$ topological recursion relation (\ref{eq:TRRg}) uses $\psi^{3g-1}=0$ on $\modm^g_1$ together with a pull-back formula for psi classes on the moduli space of {\em pre-stable} curves.  The vanishing of $\psi^{3g-1}$ on $\modm^g_1$ is simply due to the dimension constraint and is applied analogously to the property $\psi_i=0$ used on $\overline{\modm}^0_3$ above.  The moduli space of pre-stable curves includes genus 0 components with only two distinguished components which we will not go into here.  Instead we give a consequence that can be stated over the moduli space of stable curves and found in \cite{GatTop}.
\begin{equation}
 \left\langle\left\langle \tau_{m+3g-1} \right\rangle\right\rangle^g=\sum_{k=1}^{3g-1}(-1)^{k-1}\hspace{-1.2cm}\sum_{\begin{array}{c}(m_0,...,m_k)\\k+\sum m_i=3g-1\end{array}} \hspace{-1cm} \left\langle\left\langle \tau_{m_0} \right\rangle\right\rangle^g
\left\langle\left\langle \tau_0\tau_{m_1} \right\rangle\right\rangle^0\cdots
\left\langle\left\langle \tau_0\tau_{m_{k-1}} \right\rangle\right\rangle^0
\left\langle\left\langle \tau_0\tau_{m_k+m} \right\rangle\right\rangle^0.
\end{equation}
Set $\vec{t}=(1+t_0,t_1,t_2,...)$ and define
\[ \left\langle\left\langle\tau_0\tau_m\right\rangle\right\rangle_{(\beta)}=\sum_{k=1}^{3g-1}(-1)^{k-1}\hspace{-1.2cm}\sum_{\begin{array}{c}(m_1,...,m_k)\\k+\displaystyle{\sum_{i>0} m_i}=\beta+1\end{array}} \hspace{-1cm} 
\left\langle\left\langle \tau_0\tau_{m_1} \right\rangle\right\rangle^0\cdots
\left\langle\left\langle \tau_0\tau_{m_{k-1}} \right\rangle\right\rangle^0
\left\langle\left\langle \tau_0\tau_{m_k+m} \right\rangle\right\rangle^0.
\]
to prove that $\left\langle\prod_{i=1}^n\tau_{m_i}\right\rangle^g_d$ satisfies the genus $g$ topological recursion relation (\ref{eq:TRRg}). 
\end{proof}

Recall that $c_1(m)=m!$ so in the $N=0$ case (\ref{eq:quasi}) is given by
\begin{equation}   \label{eq:GWpt}
p_g(m_1,...,m_n)=p^{(0)}_g(m_1,...,m_n):=\prod_{i=1}^nm_i!\left\langle\prod_{i=1}^n\tau_{m_i}\cdot\exp\tau_0\right\rangle^g
\end{equation}
where we drop the superscript $(0)$.  Each $p_g(m_1,...,m_n)$ is a polynomial, by Theorem~\ref{th:GWquasi1}, or more directly from the following lemma which gives an explicit formula and will be needed later.
\begin{lemma}  \label{th:formula0}
\[
p_g(m_1,...,m_n)=\prod_{i=1}^nm_i!\left\langle\prod_{i=1}^n\tau_{m_i}\cdot\exp\tau_0\right\rangle^g
=\sum_{|\beta|=3g-3+n\ }\prod_{i=1}^n\binom{m_i}{\beta_i}\cdot\beta_i!
\left\langle\prod_{i=1}^n\tau_{\beta_i}\right\rangle^g.
\]
\end{lemma}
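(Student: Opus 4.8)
The plan is to expand $\exp\tau_0=\sum_{d\ge 0}\frac{1}{d!}\tau_0^d$ inside the bracket and then peel off the resulting $\tau_0$ insertions one at a time using the string equation. First I would note that, by the dimension constraint (\ref{eq:N=0dim}), $\int_{\overline{\modm}^g_{n+d'}}\prod_i\psi_i^{m_i}$ vanishes unless $3g-3+n+d'=\sum_i m_i$, so only one term of the exponential survives: setting $d:=\sum_i m_i-(3g-3+n)$,
\[ \left\langle\prod_{i=1}^n\tau_{m_i}\cdot\exp\tau_0\right\rangle^g=\frac{1}{d!}\left\langle\tau_0^d\prod_{i=1}^n\tau_{m_i}\right\rangle^g. \]

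Next I would apply the string equation $d$ times to strip off the $d$ copies of $\tau_0$. Each application of $\langle\tau_0\cdot(\cdots)\rangle^g=\sum_j\langle\cdots\tau_{k_j-1}\cdots\rangle^g$ lowers exactly one remaining index by one; applied to another $\tau_0$ factor it produces $\tau_{-1}=0$ and drops out, so only the indices $m_1,\dots,m_n$ are ever decremented. After $d$ steps a choice of which marked point to decrement at each step is a word of length $d$ in the alphabet $\{1,\dots,n\}$; a word with $d_i$ occurrences of the letter $i$ contributes $\langle\prod_i\tau_{m_i-d_i}\rangle^g$, and there are $\binom{d}{d_1,\dots,d_n}$ such words, so
\[ \left\langle\tau_0^d\prod_{i=1}^n\tau_{m_i}\right\rangle^g=\sum_{d_1+\dots+d_n=d}\binom{d}{d_1,\dots,d_n}\left\langle\prod_{i=1}^n\tau_{m_i-d_i}\right\rangle^g. \]
(One can instead argue by induction on $d$ using a single application of the string equation, but the direct iteration seems cleanest.)

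Finally I would substitute $\beta_i=m_i-d_i$ and multiply through by $\prod_i m_i!$. Since $\langle\prod_i\tau_{\beta_i}\rangle^g$ vanishes unless $|\beta|=3g-3+n$, and this is equivalent to $\sum_i d_i=d$, the surviving terms are precisely those with $|\beta|=3g-3+n$; the implicit constraint $\beta_i\le m_i$ is absorbed into $\binom{m_i}{\beta_i}=0$ for $\beta_i>m_i$. The factor $d!$ cancels the multinomial coefficient, leaving
\[ \frac{\prod_i m_i!}{d!}\binom{d}{d_1,\dots,d_n}=\prod_i\frac{m_i!}{d_i!}=\prod_i\frac{m_i!}{(m_i-\beta_i)!}=\prod_i\binom{m_i}{\beta_i}\,\beta_i!, \]
which is exactly the claimed coefficient.

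I expect the only delicate point to be the combinatorial bookkeeping of the multinomial coefficients generated by iterating the string equation, together with the (easy but essential) observation that decrementing a $\tau_0$ yields $\tau_{-1}=0$, so that no spurious terms appear. Everything else is routine; the string equation is used throughout in the stable range $2g-2+n>0$, since the intermediate brackets live on $\overline{\modm}^g_{n+j}$ with $j\ge 0$.
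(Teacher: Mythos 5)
Your proof is correct and follows essentially the same route as the paper's: both arguments reduce the identity to repeated use of the string equation to strip off the $\tau_0$ insertions produced by $\exp\tau_0$ (with the observation that decrementing a $\tau_0$ gives $\tau_{-1}=0$, and with $\binom{m_i}{\beta_i}=0$ absorbing the out-of-range terms). The only difference is presentational: the paper verifies the closed formula by induction on $d=\sum_i m_i-(3g-3+n)$ via the relation $d\cdot p_g(m_1,\dots,m_n)=\sum_i m_i\, p_g(m_1,\dots,m_i-1,\dots,m_n)$, whereas you derive the same formula directly by iterating the string equation $d$ times and collecting the multinomial coefficients.
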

\begin{proof}
There are three cases corresponding to
\[ d=\sum m_i-(3g-3+n)\]
being negative, zero and positive.  

When $d<0$, the formula is true since both sides vanish.  The left hand side $\left\langle\prod_{i=1}^n\tau_{m_i}\cdot\exp\tau_0\right\rangle^g=0$ by dimensional constraints.  The right hand side vanishes since $d<0$ implies there exists an $i\in\{1,...,n\}$ such that $m_i<\beta_i$ and hence the factor $\binom{m_i}{\beta_i}$ vanishes.

When $d=0$, the left hand side is simply $\prod_{i=1}^nm_i!\left\langle\prod_{i=1}^n\tau_{m_i}\right\rangle^g$.  On the right hand side, unless $\beta_i=m_i$ for all $i=1,...,n$, then there exists an $i\in\{1,...,n\}$ such that $\beta_i>m_i$ and hence the factor $\binom{m_i}{\beta_i}$ vanishes.  This leaves the only surviving summand $\prod_{i=1}^nm_i!\left\langle\prod_{i=1}^n\tau_{m_i}\right\rangle^g$ where $\beta_i=m_i$ for all $i=1,...,n$.

When $d>0$, then $\exp\tau_0$ makes a non-trivial contribution, so in particular there is a $\tau_0$ term and $p_g(m_1,...,m_n)$ is uniquely determined by the string equation and the $d=0$ case.  The string equation is
\[ d\cdot p_g(m_1,...,m_n)=\sum_{i=1}^nm_i\cdot p_g(m_1,...,m_i-1,...,m_n)
\]
and the formula is proven by induction on $d$.  The initial case $d=0$ has been proven.  For $d>0$,
\begin{align*}
d\cdot p_g(m_1,...,m_n)&=\sum_{i=1}^nm_i\cdot p_g(m_1,...,m_i-1,...,m_n)& {\rm string\ equation}\\
&=\sum_{i=1}^n\hspace{0cm}\sum_{|\beta|=3g-3+n\ }\hspace{-.5cm}m_i\cdot\binom{m_1}{\beta_1}\cdot\cdot\binom{m_i-1}{\beta_i}\cdot\cdot\binom{m_n}{\beta_n}\cdot\prod\beta_i!
\left\langle\prod_{i=1}^n\tau_{\beta_i}\right\rangle^g&{\rm inductive\ hypothesis}\\
&=\hspace{-.5cm}\sum_{|\beta|=3g-3+n\ }\hspace{-0cm}\sum_{i=1}^n(m_i-\beta_i)\cdot\prod\binom{m_i}{\beta_i}\cdot\beta_i!
\left\langle\prod_{i=1}^n\tau_{\beta_i}\right\rangle^g&\\
&=d\cdot\hspace{-.3cm}\sum_{|\beta|=3g-3+n\ }\hspace{-.2cm}\prod\binom{m_i}{\beta_i}\cdot\beta_i!
\left\langle\prod_{i=1}^n\tau_{\beta_i}\right\rangle^g&\Leftarrow\quad\sum(m_i-\beta_i)=d\\
\end{align*}
and divide both sides by $d>0$ to get the result.
\end{proof}

\subsection{Asymptotic behaviour of Gromov-Witten invariants}
The large degree asymptotic behaviour of stationary Gromov-Witten invariants of $\bp^N$ is stated in Corollary~\ref{th:asym}.  It is an immediate consequence of Theorem~\ref{th:GWquasi} which gives the highest degree terms in the polynomial part.  These highest degree terms, and hence the asymptotic behaviour, are governed by the $N=0$ case.

\begin{proof}[Proof of Theorem~\ref{th:GWquasi}]
The proof of quasi-polynomiality follows immediately from Theorem~\ref{th:GWquasi1}.  It remains to prove that the top degree terms are in fact {\em polynomial}, i.e. there is no $\bmod{\ N+1}$ dependence, with coefficients given by intersections of $\psi$ classes over the moduli space of curves.

We first prove that the top degree terms are essentially independent of $N$--- the dependence is simply the factor $(N+1)^{3-2g-n}$.  The proof is, as usual, by induction.  The recursions that define the quasi-polynomials term-by-term correspond between different $N$.  The quasi-polynomials contain constants that depend on Gromov-Witten invariants of insertions $\tau_m$ for $m<3g-1$.  But the top degree terms are independent of these constants---the two degree terms are both strictly positive in (\ref{eq:degpol}) in the construction of the quasi-polynomials contained in the proof of Theorem~\ref{th:GWquasi1}.  By the inductive hypothesis the top degree term of each quasi-polynomial is $(N+1)^{3-2g'-n'}$ times an intersection number, independent of $N$.    The genus zero 2-point invariants contribute only a factor of 1 to the more complicated quasi-polynomial so have no influence.

The independence of the top degree terms from $N$ immediately implies the top degree terms are polynomial by considering the $N=0$ case, i.e. the top degree terms of the $N=0$ polynomials coincide (up to a factor of $(N+1)^{3-2g-n}$) with the top degree terms of the quasi-polynomials from $\bp^N$, $N>0$.  In particular there is no $\bmod{N+1}$ dependence in the top degree terms.

To prove that the top coefficients are intersection numbers of $\psi$ classes over the moduli space of curves it is enough to prove this for any $N$.  Again we consider the $N=0$ case.  We use the explicit formula for $p_g(m_1,...,m_n)$ given in Lemma~\ref{th:formula0}.  For $\beta_i$ constant and $m_i$ a variable, the top coefficient of the polynomial $\binom{m_i}{\beta_i}\cdot\beta_i!$ is $m_i^{\beta_i}$.  Hence the top coefficients $c_{\beta}$ of $m_1^{\beta_1}\cdots m_n^{\beta_n}$ in $p_g(m_1,...,m_n)$ are
\[c_{\beta}=\left\langle\prod_{i=1}^n\tau_{\beta_i}\right\rangle^g=\int_{\overline{\modm}_{g,n}}\psi_1^{\beta_1}...\psi_n^{\beta_n}\]
as required.  For $N>0$, the top coefficients are
\[c^{(N)}_{\beta}=(N+1)^{3-2g-n}\int_{\overline{\modm}_{g,n}}\psi_1^{\beta_1}...\psi_n^{\beta_n}\]
and the theorem is proven.
\end{proof}
{\em Remark.}  The $N=1$ case has already been studied in \cite{NScGro} where the coefficients are proven to be the correct intersection numbers of $\psi$ classes when $g=0$ and $g=1$.

It will be useful to identify the quasi-polynomials $q^{(N)}_{g,\vec{\kappa}}$ in Theorem~\ref{th:GWquasi1} with the stationary quasi-polynomials $p^{(N)}_g$ evaluated at particular values.   
\begin{theorem}  \label{th:negeval2}
When all non-stationary insertions are primary, $m_i\geq 3g-1$, $i=1,...,n$ and $n+s>2$
\[q^{(N)}_{g,\vec{\kappa}}(m_1,\dots,m_n)=p^{(N)}_g(k_1-N,...,k_s-N,m_1,\dots,m_n)\] 
where $\vec{\kappa}=(0,k_1,0,...,0,k_s)$.
\end{theorem}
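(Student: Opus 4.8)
The plan is to prove the identity by induction on the same complexity parameters used in the proof of Theorem~\ref{th:GWquasi1}, namely the genus $g$, the total number of insertions $n+s$, and the number of stationary descendant insertions $n$, matching the induction structure term-by-term against the one used to build the quasi-polynomial $p^{(N)}_g$. The key observation is that the quasi-polynomial $p^{(N)}_g(m_1,\dots,m_{n+s})$ is \emph{defined} by the recursions coming from the genus $0$ TRR~(\ref{eq:TRR0}) and genus $g$ TRR~(\ref{eq:TRRg}), with the genus zero one-point and two-point functions (and the $m<3g-1$ constants) as initial data; so to prove the claimed equality it suffices to check that (i) the two sides agree in the base cases, and (ii) both sides satisfy the \emph{same} recursion once we substitute $m_j\mapsto k_j-N$ for the primary slots.

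First I would handle the base cases. The genuinely new content is that $\tau_0(\omega^k)$ behaves, under the quasi-polynomial, like $\tau_{k-N}(pt)$. The cleanest place to see this is Lemma~\ref{th:2primdes}: the two-point formula there gives $\langle\tau_m(pt)\tau_0(\omega^k)\rangle = 1/c_{N+1}(m)\cdot 1/d$ with $d=(m+k+1)/(N+1)$, and one checks directly that this equals the value obtained by plugging $m_1=m$, $m_2=k-N$ into $p^0_2(m_1,m_2)=(N+1)/(m_1+m_2+N+1)$ and dividing by $c_{N+1}(m)c_{N+1}(k-N)$; here one must interpret $c_{N+1}(k-N)$ for the negative argument $k-N$ (with $-N\le k-N\le 0$) appropriately — this is exactly the point where the ``virtual'' stationary insertion is pinned down, and I would fix the convention $c_{N+1}(k-N):=1$ for $k=0,\dots,N$ so that the arithmetic closes. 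The one-point case $n+s=1$ (with $s=1$, i.e. $\langle\tau_0(\omega^k)\rangle$, which is $1$ when $k=N$, $d=0$ and $0$ otherwise) must likewise be matched against the degenerate ``degree $-2$'' polynomial $p^0_1(m)=(N+1)^2/(m+2)^2$ evaluated at $m=k-N$; note $p^0_1(-N)=(N+1)^2/(2-N)^2$ does \emph{not} generally equal the one-point invariant, which is why the statement requires $n+s>2$ — I would check that restriction is used precisely here and at the two-point level the inductive step never re-introduces a one-point factor except as the harmless constant $1$.

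Next, the inductive step. Take the expression $q^{(N)}_{g,\vec\kappa}(m_1,\dots,m_n)$ with some $m_i$, say $m_1$, nonzero (if all $m_i=0$ both sides are constants and equal by the base case plus dimension counting), and apply the same TRR used in the proof of Theorem~\ref{th:GWquasi1} — equation~(\ref{eq:mainduc}) in genus $0$ or~(\ref{eq:mainduc1}) in genus $g$. Each summand factorises into pieces of strictly smaller complexity, and by the inductive hypothesis each such piece, when it contains primary insertions $\tau_0(\omega^{k_j})$, equals the corresponding stationary quasi-polynomial evaluated at $k_j-N$. Since $p^{(N)}_g$ is \emph{built} from exactly the same TRR applied to all-stationary arguments, and since the combinatorial prefactors ($\lceil m_1/(N+1)\rceil$ coming from $c_{N+1}(m_1-1)^{-1}=\lceil m_1/(N+1)\rceil c_{N+1}(m_1)^{-1}$, and the degree bookkeeping of~(\ref{eq:degpol})) are identical on both sides, the two recursions coincide and the result follows. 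The main obstacle I anticipate is bookkeeping at the two-point factor in the special summand~(\ref{eq:2pterm}): there the factor $\langle\tau_0(\omega^k)\tau_{m_1-1}(pt)\rangle$ must be shown to be precisely what the stationary two-point quasi-polynomial $p^0_2$ produces at a negative argument, \emph{and} one must confirm that the index $k$ dictated by the dimension constraint is exactly the one for which $k-N$ lands in the range $\{-N,\dots,0\}$, so that no spurious ``out of range'' negative insertion (with $k-N<-N$) ever appears. Verifying that the dimension constraint always selects an admissible $k$ — equivalently, that the substitution $m_j\mapsto k_j-N$ is consistent with $\sum m_i\equiv 2(2g-2+n)\bmod (N+1)$ on both sides — is the one place where I'd expect to spend real care; everything else is a mechanical comparison of two identically-structured inductions.
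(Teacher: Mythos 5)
Your skeleton (induction on genus, number of insertions, and number of stationary insertions, driven by the TRRs, with Lemma~\ref{th:2primdes} supplying the two-point base case) matches the paper's, and your term-by-term comparison of the two TRR expansions is essentially how the paper reduces the $n>0$ case to $n=0$. But there is a genuine gap at the heart of the argument: the case $n=0$, i.e.\ the claim $\left\langle\prod_j\tau_0(\omega^{k_j})\right\rangle^g=p^{(N)}_g(k_1-N,\dots,k_s-N)$, which you dismiss as ``both sides are constants and equal by the base case plus dimension counting.'' This is the crux, not a triviality. The right-hand side is a polynomial \emph{extrapolation} of the quasi-polynomial to arguments outside the range where it is defined by actual stationary invariants, so the statement ``both sides satisfy the same recursion after substituting $m_j\mapsto k_j-N$'' has no content until you explain why the extrapolated values obey anything at all. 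Relatedly, your proposed convention $c_{N+1}(k-N):=1$ is a red herring: $p^{(N)}_g$ is defined by (\ref{eq:quasi}) with $c_{N+1}$ evaluated only at the honest arguments $m_i\geq 3g-1$, and no value of $c_{N+1}$ at a negative argument enters the statement.

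The missing idea is the following vanishing mechanism, which is what the paper's proof actually turns on. Iterating the TRR on $\left\langle\prod_j\tau_{n_j}(pt)\right\rangle^g$ and multiplying by $\prod_j c_{N+1}(n_j)$, exactly one summand at each stage places $\tau_{n_j-1}(pt)$ in a genus zero two-point invariant, and by Lemma~\ref{th:2primdes} that factor times $c_{N+1}(n_j)$ equals $1$; every other summand picks up the factor $\left\lceil n_j/(N+1)\right\rceil$ from $c_{N+1}(n_j)/c_{N+1}(n_j-1)$. This yields an identity of quasi-polynomials
\[
p^{(N)}_g(n_1,\dots,n_s)=\left\langle\prod_{j=1}^s\tau_0(\omega^{k_j})\right\rangle^g+\sum_j\left\lceil\frac{n_j}{N+1}\right\rceil q_j(n_1,\dots,n_s),
\]
and evaluating at $n_j=k_j-N$ kills the entire sum because $\left\lceil(k_j-N)/(N+1)\right\rceil=0$ for $k_j\in\{0,\dots,N\}$. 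Without isolating this surviving summand and this vanishing, your induction has no base and the equality at negative arguments does not follow. (As a minor point, the hypothesis $n+s>2$ is there so that the genus zero TRR can be applied; the two-point case is handled separately in the proof of Theorem~\ref{th:negeval}, not inside this induction.)
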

\begin{proof}
We start with $n=0$, where we need to prove that primary invariants are stored in the stationary quasi-polynomials.

{\bf Genus 0.}
Using the genus zero TRR (\ref{eq:TRR0}),
\[
\left\langle \prod_{j=1}^s\tau_{n_j}(pt)\right\rangle^0=\left\langle \tau_{n_1-1}(pt)\tau_0(\omega^k)\right\rangle^0\left\langle \tau_0(\omega^{N-k})\prod_{j=2}^s\tau_{n_j}(pt)\right\rangle^0+\left\langle \tau_{n_1-1}(pt)...\right\rangle^0
\]
where the RHS contains one summand, which is shown, with $\tau_{n_1-1}(pt)$ in a genus zero 2-point invariant, and all other summands with $\tau_{n_1-1}(pt)$ in a genus zero $k$-point invariant for $k>2$.  Hence
\begin{align*}
\left\langle \prod_{j=1}^s\tau_{n_j}(pt)\right\rangle^0c_{N+1}(n_1)
&=\left\langle \tau_{n_1-1}(pt)\tau_0(\omega^{N-k_1})\right\rangle^0c_{N+1}(n_1)\left\langle \tau_0(\omega^{k_1})\prod_{j=2}^s\tau_{n_j}(pt)\right\rangle^0+\left\lceil\frac{n_1}{N+1}\right\rceil q(n_1,..,n_m)\\
&=\left\langle \tau_0(\omega^{k_1})\prod_{j=2}^s\tau_{n_j}(pt)\right\rangle^0+\left\lceil\frac{n_1}{N+1}\right\rceil\times q(n_1,...,n_m)
\end{align*}
where $\left\langle \tau_{n_1-1}(pt)\tau_0(\omega^k)\right\rangle^0\cdot c_{N+1}(n_1)=1$ from Lemma~\ref{th:2primdes} and $q(n_1,...,n_m)$ is quasi-polynomial in $n_1$ from Theorem~\ref{th:GWquasi1}.   The factor $\left\lceil\frac{n_1}{N+1}\right\rceil$ comes from $c_{N+1}(n_1)=\left\lceil\frac{n_1}{N+1}\right\rceil c_{N+1}(n_1-1)$ and is explicit in the proof of Theorem~\ref{th:GWquasi1}.  Dimension considerations yield $k_1\equiv m_1+N\bmod{N+1}$.

Now apply the genus zero TRR (\ref{eq:TRR0}) to $\left\langle \tau_0(\omega^{k_1})\prod_{j=2}^s\tau_{n_j}(pt)\right\rangle^0$ and further apply it iteratively to get
\begin{equation}  \label{eq:eval}
\left\langle \prod_{j=1}^s\tau_{n_j}(pt)\right\rangle^0\cdot \prod_{j=1}^sc_{N+1}(n_j)=\left\langle\prod_{j=1}^s \tau_0(\omega^{k_j})\right\rangle^0+\sum\left\lceil\frac{n_j}{N+1}\right\rceil\times q_j(n_1,...,n_m).
\end{equation}
Combine (\ref{eq:eval}) with
\[ \left\lceil\frac{k_j-N}{N+1}\right\rceil=0,\quad k_j\in\{0,...,N\}\]
and
\[p^0_s(n_1,...,n_s)=\left\langle \prod_{j=1}^s\tau_{n_j}(pt)\right\rangle^0\cdot\prod_{j=1}^sc_{N+1}(n_j)\]
to prove
\[  \left\langle \prod_{j=1}^s\tau_0(\omega^{k_j})\right\rangle^0= p^0_{s}(k_1-N,...,k_s-N)\]
which is the genus zero $n=0$ case.

We now reduce the genus zero $n>0$ case to the $n=0$ case using induction and the TRR (\ref{eq:TRR0}).  

We need to compare the two expressions
$\left\langle \prod_{j=1}^s\tau_0(\omega^{k_j})\prod_{i=1}^n\tau_{m_i}(pt)\right\rangle^g$
and
$\left\langle \prod_{j=1}^s\tau_{n_j}(pt)\prod_{i=1}^n\tau_{m_i}(pt)\right\rangle^g$
for $n_j\equiv k_j-N\bmod{N+1}$.
We can assume $m_1>0$ since it is a variable.  Since $n+s>2$ we can apply the genus zero TRR (\ref{eq:TRR0}) to get
\begin{equation}  \label{eq:mainduc2}
\left\langle \prod_{j=1}^s\tau_0(\omega^{k_j})\prod_{i=1}^n\tau_{m_i}(pt)\right\rangle^0
=\sum_{U}\left\langle\tau_0(\omega^{k_U})\tau_{m_1-1}(pt)\tau_U\right\rangle\left\langle\tau_0(\omega^{N-k_U})\tau_{a_2}(\omega^{b_2})\tau_{a_3}(\omega^{b_3})\tau_V\right\rangle.
\end{equation}
\begin{equation}  \label{eq:mainduc3}
\left\langle \prod_{j=1}^s\tau_{n_j}(pt)\prod_{i=1}^n\tau_{m_i}(pt)\right\rangle^0
=\sum_{U}\left\langle\tau_0(\omega^{k_U})\tau_{m_1-1}(pt)\tau'_{U}\right\rangle\left\langle\tau_0(\omega^{N-k_U})\tau_{a_2}(pt)\tau_{a_3}(pt)\tau'_V\right\rangle.
\end{equation}
where $\tau_{m_1}(pt)\tau_{a_2}(\omega^{b_2})\tau_{a_3}(\omega^{b_3})\tau_U\tau_V=\prod_{j=1}^s\tau_0(\omega^{k_j})\prod_{i=1}^n\tau_{m_i}(pt)$ together with corresponding  factorisations $\tau_{m_1}(pt)\tau_{a_2}(pt)\tau_{a_3}(pt)\tau'_U\tau'_V=\prod_{j=1}^s\tau_{n_j}(pt)\prod_{i=1}^n\tau_{m_i}(pt)$.  By the inductive assumption, corresponding terms on the right hand sides coincide when we evaluate the quasi-polynomial at $n_j=k_j-N$, and remove the factor $1/c(n_j)$.  The condition $n_j\equiv N-k_j\bmod{N+1}$ determines which polynomial representative of the quasi-polynomial to take (and guarantees the $k_U$ coincide.)

The initial conditions for the induction are the case $n=0$.  Note that the 2-point invariants are not initial conditions, as in the inductive proof of Theorem~\ref{th:GWquasi1}, since they appear here as the same factor $\left\langle\tau_{m_1-1}(pt)\tau_0(\omega^k)\right\rangle^0$ trivially equal in the two expressions.  Hence we have reduced the case $n>0$ to the case $n=0$ and the Theorem is proven for genus zero.\\

{\bf Genus $\bf g>0$.}  From the description of the genus 0 expression $\left\langle \tau_{n_1+1-3g}(pt)\tau_0(\omega^k)\right\rangle_{(\beta)}$ in the proof of Theorem~\ref{th:GWquasi1} we see that for $\beta=3g-2$ we have
\[ \left\langle \tau_{n_1+1-3g}(pt)\tau_0(\omega^k)\right\rangle_{(3g-2)}
=\left\langle \tau_{n_1-1}(pt)\tau_0(\omega^{N-k_1})\right\rangle^0+\left\langle \tau_r(pt)\cdots\right\rangle^0,\quad r<n_1-1.\]
Combining this with the genus $g$ TRR (\ref{eq:TRRg}),
\[
\left\langle \prod_{j=1}^s\tau_{n_j}(pt)\right\rangle^g=\left\langle \tau_{n_1+1-3g}(pt)\tau_0(\omega^k)\right\rangle_{(3g-2)}\left\langle \tau_0(\omega^{N-k})\prod_{j=2}^s\tau_{n_j}(pt)\right\rangle^g+\left\langle \tau_{n_1+1-3g}(pt)...\right\rangle
\]
where the RHS contains one summand, which is shown, with $\tau_{n_1+1-3g}(pt)$ in a genus zero 2-point invariant, and all other summands with $\tau_{n_1+1-3g}(pt)$ in a $(g',n')$-point invariant for $2g'-2+n'>0$, we get
\begin{align*}
\left\langle \prod_{j=1}^s\tau_{n_j}(pt)\right\rangle^gc_{N+1}(n_1)
&=\left\langle \tau_{n_1-1}(pt)\tau_0(\omega^{N-k_1})\right\rangle^0c_{N+1}(n_1)\left\langle \tau_0(\omega^{k_1})\prod_{j=2}^s\tau_{n_j}(pt)\right\rangle^g+\left\lceil\frac{n_1}{N+1}\right\rceil q(n_1,..,n_s)\\
&=\left\langle \tau_0(\omega^{k_1})\prod_{j=2}^s\tau_{n_j}(pt)\right\rangle^g+\left\lceil\frac{n_1}{N+1}\right\rceil\times q(n_1,...,n_s).
\end{align*}
Again we have used $\left\langle \tau_{n_1-1}(pt)\tau_0(\omega^k)\right\rangle^0\cdot c_{N+1}(n_1)=1$ from Lemma~\ref{th:2primdes} and we have $q(n_1,...,n_s)$ quasi-polynomial in $n_1$ from Theorem~\ref{th:GWquasi1}.  This iteratively leads to an expression analogous to (\ref{eq:eval}) for $g>0$:
\[
\left\langle \prod_{j=1}^s\tau_{n_j}(pt)\right\rangle^g\cdot \prod_{j=1}^sc_{N+1}(n_j)=\left\langle\prod_{j=1}^s \tau_0(\omega^{k_j})\right\rangle^g+\sum\left\lceil\frac{n_j}{N+1}\right\rceil\times q_j(n_1,...,n_m)
\]
and hence 
\[  \left\langle \prod_{j=1}^s\tau_0(\omega^{k_j})\right\rangle^g= p^{(N)}_g(k_1-N,...,k_s-N)\]
as required.

The reduction of the $g>0$, $n>0$ case to the $g>0$, $n=0$ uses the genus $g$ TRR (\ref{eq:TRRg} which yield (\ref{eq:mainduc1}).   The right hand side of (\ref{eq:mainduc1}) is simpler in the induction---either it is genus 0 or there are fewer than $n+s$ insertions in each factor, or there are fewer than $n$ descendant insertions.  Hence evaluation at negative values reduces to the proven initial cases of evaluation at negative values for the genus zero quasi-polynomials or to the $n=0$ case.
\end{proof}
\begin{proof}[Proof of Theorem~\ref{th:negeval}]
The genus zero 2-point case can be checked explicitly using Lemma~\ref{th:2primdes} which gives
\[\langle\tau_{m}(pt)\tau_0(\omega^k)\rangle=\frac{1}{c_{N+1}(m)}\cdot \frac{1}{d},\quad d=\frac{m+k+1}{N+1}\]
and
\[\langle\tau_{m_1}(pt)\tau_{m_2}(pt)\rangle=\frac{1}{c_{N+1}(m_1)c_{N+1}(m_2)}\cdot \frac{1}{d},\quad d=1+\frac{m_1+m_2}{N+1}.\]
Hence $p^0_2(m_1,m_2)=\frac{N+1}{m_1+m_2+N+1}$ and
\[\langle\tau_{m_1}(pt)\tau_{0}(\omega^k)\rangle\cdot c_{N+1}(m_1)=p^0_2(m_1,k-N)\]
as required.

The remaining cases of Theorem~\ref{th:negeval} are proven in Theorem~\ref{th:negeval2} in terms of the quasi-polynomials.   
\end{proof}

\section{The projective line $\bp^1$}  \label{sec:P1}

One of the earliest studies of stationary Gromov-Witten invariants was undertaken by Okounkov and Pandharipande in their papers \cite{OPaEqu,OPaGro,OPaVir} on Gromov-Witten theory of target curves.  They showed that considering stationary invariants alone is a natural problem, related to Hurwitz problems and partitions.  The case of target $\bp^1$ is fundamental to all of their results.  In \cite{NScGro} the stationary Gromov-Witten invariants of $\bp^1$ were related to Eynard-Orantin invariants which arise out of matrix models but can be defined independently of matrix models.

\subsection{Eynard-Orantin invariants.}  \label{sec:EO}

The Eynard-Orantin invariants are defined for any $(C,x,y)$ consisting of a rational curve $C$ equipped with two meromorphic functions $x$ and $y$ with the property that the branch points of $x$ are simple and the map 
\[ \begin{array}[b]{rcl} C&\to&\bc^2\\p&\mapsto& (x(p),y(p))\end{array}\]
is an immersion.  For every $(g,n)\in\bz^2$ with $g\geq 0$ and $n>0$ the Eynard-Orantin invariant is  a multidifferential $\omega^g_n(p_1,...,p_n)$, i.e. a tensor product of meromorphic 1-forms on the product $C^n$, where $p_i\in C$.   When $2g-2+n>0$, $\omega^g_n(p_1,...,p_n)$ is defined recursively in terms of local  information around the poles of $\omega^{g'}_{n'}(p_1,...,p_n)$ for $2g'+2-n'<2g-2+n$.  The $\omega^{g'}_{n'}(p_1,...,p_n)$ are used as kernels on the Riemann surface.   For example, $\omega^0_2(w,z)$ is the Cauchy kernel $dwdz/(w-z)^2$.

Since each branch point $\alpha$ of $x$ is simple, for any point $p\in C$ close to $\alpha$ there is a unique point $\hat{p}\neq p$ close to $\alpha$ such that $x(\hat{p})=x(p)$.  The recursive definition of $\omega^g_n(p_1,...,p_n)$ uses only local information around branch points of $x$ and makes use of the well-defined map $p\mapsto\hat{p}$ there. The invariants are defined as follows.
\begin{align*}
\omega^0_1&=-ydx(z)\nonumber\\
\omega^0_2&=\frac{dz_1dz_2}{(z_1-z_2)^2}
\end{align*}
For $2g-2+n>0$,
\begin{equation}  \label{eq:EOrec}
\omega^g_{n+1}(z_0,z_S)=\sum_{\alpha}\hspace{-2mm}\res{z=\alpha}K(z_0,z)\hspace{-.5mm}\biggr[\omega^{g-1}_{n+2}(z,\hat{z},z_S)+\hspace{-5mm}\displaystyle\sum_{\begin{array}{c}_{g_1+g_2=g}\\_{I\sqcup J=S}\end{array}}\hspace{-5mm}
\omega^{g_1}_{|I|+1}(z,z_I)\omega^{g_2}_{|J|+1}(\hat{z},z_J)\biggr]
\end{equation}
where the sum is over branch points $\alpha$ of $x$, $S=\{1,...,n\}$, $I$ and $J$ are non-empty and 
\[\displaystyle K(z_0,z)=\frac{-\int^z_{\hat{z}}\omega^0_2(z_0,z')}{2(y(z)-y(\hat{z}))dx(z)}\] is well-defined in the vicinity of each branch point of $x$.   Note that the quotient of a differential by the differential $dx(z)$ is a meromorphic function.  The recursion (\ref{eq:EOrec}) depends only on the meromorphic differential $ydx$ and the map $p\mapsto\hat{p}$ around branch points of $x$.  For $2g-2+n>0$, each $\omega^g_n$ is a symmetric multidifferential with poles only at the branch points of $x$, of order $6g-4+2n$, and zero residues.

When $y$ is not a meromorphic function on $C$ and is merely analytic in a domain containing the branch points of $x$, we approximate it by a sequence of meromorphic functions $y^{(N)}$ which agree with $y$ at the branch points of $x$ up to the $N$th derivatives.   For example, define 
\begin{equation}  \label{eq:lnz}
 C=\begin{cases}x=z+1/z\\ 
y=\ln{z}\sim\sum\frac{(1-z^2)^k}{\hspace{-3mm}-2k}
\end{cases}
\end{equation}
via partial sums $y_N$.  The Riemann surface $C$ is defined via the meromorphic function $x(z)$.  The function $y(z)=\ln{z}\sim\sum\frac{(1-z^2)^k}{\hspace{-3mm}-2k}$ is to be understood as the sequence of partial sums $y_N=\displaystyle{\sum_1^N}$$\frac{(1-z^2)^k}{\hspace{-3mm}-2k}$.  Each invariant requires only a finite $y_N$---for fixed $(g,n)$ the sequence of invariants $\omega^g_n$ of $(C,x,y_N)$ stabilises for $N\geq 6g-6+2n$.
As in the introduction assemble the stationary Gromov-Witten invariants of $\bp^1$ into the generating function multidifferential
\[\Omega^g_n(x_1,...,x_n)=\sum_{\vec{m}}\left\langle \prod_{i=1}^n \tau_{m_i}(pt) \right\rangle^g_{\bp^1}\hspace{-.2cm}\cdot\hspace{.2cm}\prod_{i=1}^n(m_i+1)!x_i^{-m_i-2}dx_i.\]
\begin{theorem}[Norbury-Scott]   \label{th:main}
For $g=0$ and 1 and $2g-2+n>0$, the Eynard-Orantin invariants of the curve $C$ defined in (\ref{eq:lnz}) agree with the generating function for the Gromov-Witten invariants of $\bp^1$:
\[ \omega^g_n\sim\Omega^g_n(x_1,...,x_n).\]
More precisely, $\Omega^g_n(x_1,...,x_n)$ gives an analytic expansion of $\omega^g_n$ around a branch of $\{x_i=\infty\}$.
\end{theorem}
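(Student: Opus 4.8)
\emph{Proof proposal.} The plan is to show that the multidifferentials $\Omega^g_n$, read as Taylor expansions around $z_i=\infty$ (one of the two preimages of $x_i=\infty$ under $x=z+1/z$), obey the same initial data and the same recursion~(\ref{eq:EOrec}) as the Eynard--Orantin invariants $\omega^g_n$ of the curve~(\ref{eq:lnz}). This comparison is legitimate because for $2g-2+n>0$ each $\omega^g_n$ has poles only at the branch points $z=\pm1$, so it is holomorphic at $z_i=\infty$ and admits such an expansion, and as a $1$-form in each $x_i$ it is regular at $x_i=\infty$, just as $\Omega^g_n$ is since $x^{-m-2}\,dx$ is regular there. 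The Gromov--Witten engine will be the topological recursion relation~(\ref{eq:TRR0}) in genus $0$ and a genus one topological recursion relation (either~(\ref{eq:TRRg}) with $g=1$, or the relation carrying a $\tfrac{1}{12}$ handle term) in genus $1$, supplemented by the string and divisor equations to clear the non-stationary insertions $\tau_0(1),\tau_0(\omega)$ that these relations create.

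First I would set up the dictionary on the curve~(\ref{eq:lnz}): the branch points of $x=z+1/z$ are $z=\pm1$, the associated involution is globally $\hat z=1/z$, and hence $y(z)-y(\hat z)=2\ln z$, $dx=(1-z^{-2})\,dz$, and
\[
K(z_0,z)=\frac{-\bigl(\tfrac{1}{z_0-z}-\tfrac{1}{z_0-1/z}\bigr)\,dz_0}{4\ln z\,(1-z^{-2})\,dz},
\]
which has a simple pole at each branch point, since the double zero of $(y(z)-y(\hat z))\,dx$ there is cancelled by a simple zero of the numerator. I would then express $\omega^0_1=-\ln z\,dx$ and the Bergman kernel $\omega^0_2=dz_1dz_2/(z_1-z_2)^2$ in a form that links them to the genus zero stationary data: using $\langle\tau_{2d-2}(pt)\rangle^0_d=1/(d!)^2$ and the identity $\sum_{d\ge1}\tfrac{1}{2d}\binom{2d}{d}x^{-2d}=\ln x-\ln z$ one sees that the $z=\infty$ expansion of $-\ln z\,dx$ is the one-point stationary generating function up to the non-analytic term $-\ln x\,dx$, which the recursion never sees; and the two-point formula of Lemma~\ref{th:2primdes} supplies the piece of the kernel that later emerges from the divisor/string reduction. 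Since $y=\ln z$ is merely holomorphic near $z=\pm1$, one works with the partial sums $y_N$ of~(\ref{eq:lnz}); the pole-order bound (order $6g-4+2n$ per variable) shows $\omega^g_n$ stabilises once $N\ge 6g-6+2n$, so only finitely many terms ever contribute.

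For the base cases I would compute $\Omega^0_3$ from~(\ref{eq:TRR0}) and the explicit one- and two-point invariants and compare Taylor coefficients at $z_i=\infty$ with $\omega^0_3$, computed from~(\ref{eq:EOrec}) with the handle term absent; and likewise $\Omega^1_1$ (with $\langle\tau_{2d}(pt)\rangle^1_d$ supplied by the genus one recursion together with the genus zero data) against $\omega^1_1=\sum_\alpha\mathrm{Res}_{z=\alpha}K(z_0,z)\,\omega^0_2(z,\hat z)$. The inductive step is the heart of the matter: applying~(\ref{eq:TRR0}) to $\tau_{m_1+1}(pt)$, the split-off classes $T_j\in\{1,\omega\}$ produce insertions $\tau_0(1)$ and $\tau_0(\omega)$, and removing these by the string and divisor equations is precisely what manufactures, on the generating-function side, the kernel $K$ — the divisor equation contributing the $2\ln z=y(z)-y(\hat z)$ in its denominator and the string equation the numerator — together with the involution $\hat z=1/z$ and the second branch point. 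The sum over which stable factor inherits the lowered index, combined with the $\bmod 2$ parity of the invariants, should reproduce the sum over $z=\pm1$ and over stable splittings in~(\ref{eq:EOrec}), while the $\omega^{g-1}_{n+2}$ handle term for $g=1$ comes from the $\tfrac{1}{12}$ term of the genus one relation.

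The principal obstacle is making the coordinate change $x\mapsto z$ and the string/divisor elimination precise enough that the re-summed Gromov--Witten identity becomes \emph{literally} the residue identity~(\ref{eq:EOrec}) rather than merely an equivalent one: one has to track the descendant-index shift against $K$, dispose of the non-analytic $\ln x$ contributions, and justify the formal manipulations against the $y_N$-truncation. The genus one case is the more delicate, both because of the handle term and because one must select the genus one topological recursion relation whose structure mirrors~(\ref{eq:EOrec}); granting the genus zero and genus one matchings of the recursions, each case of the theorem then follows by induction on $n$, with the genus zero invariants serving as input to the genus one recursion.
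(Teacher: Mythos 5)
This theorem is not proven in the present paper: it carries the attribution ``[Norbury--Scott]'' and is imported from \cite{NScGro}; the only proof given in Section~\ref{sec:P1} is of the separate Theorem~\ref{th:p1gen}, which \emph{uses} Theorem~\ref{th:main}'s framework but does not establish it. So there is no in-paper argument to compare yours against, and you should be aware that you are proposing a proof of a cited external result.

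Judged on its own terms, your plan --- expand both sides at $z_i=\infty$ and show they satisfy the recursion~(\ref{eq:EOrec}) with the same initial data, with the kernel $K$ supposedly emerging from the string/divisor elimination of the $\tau_0(1)$ and $\tau_0(\omega)$ insertions created by the TRR --- is a reasonable but genuinely different route from the one taken in \cite{NScGro}, and its central step is exactly the one you defer. In \cite{NScGro} the genus zero case is handled by showing that the Eynard--Orantin invariants satisfy the string and divisor equations in the form~(\ref{EOstring}), that the stationary Gromov--Witten side satisfies the corresponding identities, and that these equations together with the explicit $1$-, $2$- and $3$-point data \emph{uniquely determine} both families; genus one is then reduced to genus zero via a genus one topological recursion relation. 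That route avoids ever having to identify the residue kernel $K$ with a re-summed TRR, which is precisely where your proposal is thinnest: the assertion that the divisor equation produces the factor $y(z)-y(\hat z)=2\ln z$ in the denominator of $K$ and the string equation its numerator is not substantiated, the bookkeeping of the descendant shift $\tau_{m_1+1}\mapsto\tau_{m_1}$ against a residue at $z=\pm1$ is not carried out, and the handling of the $\bmod 2$ parity versus the sum over the two branch points is asserted rather than checked. Your peripheral observations (holomorphy of $\omega^g_n$ at $z=\infty$ for $2g-2+n>0$, the identity $(\ln x-\ln z)\,dx=\sum_d\tfrac{1}{2d}\binom{2d}{d}x^{-2d}dx$ matching~(\ref{eq:excep}), stabilisation of the $y_N$ truncation) are all correct, but as written this is a programme whose hardest step is named rather than executed; either carry out that identification in detail or switch to the string/divisor uniqueness argument, for which the needed equation~(\ref{EOstring}) is already recorded in Section~3.2.
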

In the two exceptional cases $(g,n)=(0,1)$ and $(0,2)$, the invariants $\omega^g_n$ are not analytic at $x_i=\infty$.  We can again get analytic expansions around a branch of $\{x_i=\infty\}$ by removing their singularities at $x_i=\infty$ as follows:  
\begin{equation}  \label{eq:excep} 
\omega^0_1+\ln{x_1}dx_1\sim\Omega^0_1(x_1),\quad\omega^0_2-\displaystyle\frac{dx_1dx_2}{(x_1-x_2)^2}\sim\Omega^0_2(x_1,x_2).
\end{equation}

\begin{proof}[Proof of Theorem~\ref{th:p1gen}]
For $I\subset\{1,...,n\}$ put
\[\Omega^g_{n,I}(x_1,...,x_n)\hspace{.2cm}=\hspace{-1cm}\sum_{\begin{array}{c}m_i\geq 3g-1,\ i\in I\\m_i<3g-1,\ i\not\in I\end{array}}\hspace{-.2cm}\left\langle \prod_{i=1}^n \tau_{m_i}(pt) \right\rangle^g_{\bp^1}\hspace{-.2cm}\cdot\hspace{.2cm}\prod_{i=1}^n(m_i+1)!x_i^{-m_i-2}dx_i\]
so
\[ \Omega^g_n(x_1,...,x_n)=\sum_{I\subset\{1,...,n\}}\Omega^g_{n,I}(x_1,...,x_n).\]
Each $\Omega^g_{n,I}(x_1,...,x_n)$ is polynomial in $x_i^{-1}$, so in particular meromorphic in $x_i$, for $i\not\in I$ .  For $i\in I$, the coefficient of $x_i^{-m_i-2}$ is $(m_i+1)!/c_2(m_i)$ times a quasi-polynomial in $m_i$.  Put $x_i=z_i+1/z_i$.  Direct calculation, see \cite{NScGro}, shows that $\Omega^g_{n,I}(x_1,...,x_n)$ is a rational function in $z_i$ with poles only at $z_i=\pm 1$, or equivalently $x_i=\pm 2$, for $i\in I$.  Hence $\Omega^g_{n,I}(x_1,...,x_n)$ is a rational function in $z_i$ for all $i\in\{1,...,n\}$ and the same is true for $\Omega^g_n(x_1,...,x_n)$.

The order of the poles at $z=\pm 1$ and the asymptotic behaviour there follows from the top degree coefficients of the quasi-polyniomials given by (\ref{eq:gwcoeff}) in Theorem~\ref{th:GWquasi}.  Only the term $\Omega^g_{n,I}(x_1,...,x_n)$ for $I=\{1,...,n\}$ contributes because a proper subset $I\subset\{1,...,n\}$ does not have highest degree terms, or equivalently shifts the poles from $x_i=\pm 2$ to $x_i=0$, for $i\not\in I$.  Thus (\ref{eq:gwcoeff}) immediately gives the asymptotic behaviour for $z_i=\pm1+s\cdot t_i$
\[
\Omega^g_n\sim s^{6-6g-3n}\frac{1}{2^{5g-5+2n}}\sum_{|\beta|}\prod_{i=1}^n \frac{(2\beta_i+1)!}{\beta_i!}\frac{dt_i}{t_i^{2\beta_i+2}}\langle \tau_{\beta_1}...\tau_{\beta_n}\rangle_g
\]
as required.
\end{proof}

\subsection{String and divisor equations}
The Eynard-Orantin invariants quite generally satisfy a string equation for $2g-2+n>0$:
\begin{equation}
\sum_{\alpha}\res{z=\alpha}y(z)x(z)^m\omega^g_{n+1}(z_S,z)=-\sum_{i=1}^n\partial_{z_i}\left(\frac{x(z_i)^m\omega^g_n(z_S)}{dx(z_i)}\right),\indent ~~~m=0,1\label{EOstring}
\end{equation}
where the sum is over the branch points $\alpha$ of $x$, $\Phi(z)=\int^z ydx(z')$ is an arbitrary antiderivative and $z_S=(z_1,\dots,z_n)$.

In the case of $x=z+1/z$, $y=\ln{z}$ this gives a relationship between the stationary Gromov-Witten of $\bp^1$.   {\em A priori} the string equations satisfies by Eynard-Orantin invariants and Gromov-Witten invariants should not be related since the latter involves non-stationary terms.  The string equations do in fact coincide when we interpret the non-stationary term in the string equation as a stationary term.  Theorem~\ref{th:negeval} does exactly that more generally.  The following is a generalisation of this observation to projective spaces.

Gromov-Witten invariants satisfy the divisor and string equations which necessarily involve non-stationary terms.
\begin{align*}
\left\langle \tau_0(\omega)\prod_{i=1}^n\tau_{m_i}(pt)\right\rangle^g_d&=d\left\langle \prod_{i=1}^n\tau_{m_i}(pt)\right\rangle^g_d&{\rm divisor\ equation}\\
\left\langle \tau_0(1)\prod_{i=1}^n\tau_{m_i}(pt)\right\rangle^g_d&=\sum_{i=1}^n\left\langle \tau_{m_1}(pt)\cdots\tau_{m_i-1}(pt)\cdots \tau_{m_n}(pt)\right\rangle^g_d&{\rm string\ equation}.
\end{align*}
where the term $\tau_{m_i-1}(\gamma_1)$ vanishes if $m_i=0$ and 
$d=d(m_1,...,m_n)$ is defined by the dimension constraint $(N+1)d+(N-3)(1-g)+n=\sum_{i=1}^n m_i+nN$ .  These give rise to divisor and string equations, respectively, between the quasi-polynomials:
\[ p^{(N)}_g(1-N,m_1,...,m_n)=d\cdot p^{(N)}_g(m_1,...,m_n)\]
and
\[ p^{(N)}_g(-N,m_1,...,m_n)=\sum_{i=1}^n\left\lceil\frac{m_i}{N+1}\right\rceil p^{(N)}_g(m_1,...,m_i-1,...,m_n).\]
These are relations between stationary invariants meaning that both sides are determined by stationary invariants alone.  For example, when $N=1$ these can be used to uniquely determine the genus 0 stationary invariants.  

The Eynard-Orantin invariants satisfy a dilaton equation
\[\sum_{\alpha}\res{z=\alpha}\Phi(z)\omega^g_{n+1}(z_S,z)=(2g-2+n)\omega^g_n(z_S)\]
where $\Phi$ is defined by $d\Phi=ydx$.  The Gromov-Witten invariants also satisfy a dilaton equation
\[\left\langle \tau_1(1)\prod_{i=1}^n\tau_{m_i}(pt)\right\rangle^g_d=(2g-2+n)\left\langle \prod_{i=1}^n\tau_{m_i}(pt)\right\rangle^g_d.\]
Again this involves a non-stationary term, while the Eynard-Orantin recursion suggests it can be expressed in terms of stationary invariants.  Indeed, in the $N=1$ case it was shown in \cite{NScGro} that for $g=0$ or $1$ and conjecturally for all $g$ that $\tau_1(1)$ classes can be evaluated via the derivative of the quasi-polynomial:
\begin{equation}
\left\langle\tau_1(1) \prod_{i=1}^n\tau_{m_i}(pt)\right\rangle^g_{\bp^1}\hspace{-.2cm}\cdot\hspace{.2cm}\prod_{i=1}^n c_2(m_i)=2\frac{\partial}{\partial m_{n+1}}p_g(m_1,\dots,m_n,m_{n+1})\Big|_{m_{n+1}=0}.
\end{equation}
A similar recursion may hold for general $N$, or perhaps not and instead this recursion may be indicative of a relation to Eynard-Orantin invariants which only occurs for $N=1$.

\section{Examples}  \label{sec:ex}

\begin{center}
\begin{spacing}{2.5}  
\begin{tabular}{||c|c|c|c||} 
\hline\hline

{\bf g} &{\bf n}&$\bf p^{(N)}_g(m_1,\dots,m_n)$& {\bf top degree terms of} $\bf p^{(N)}_g$\\ \hline
0&2&$\displaystyle\frac{N+1}{(m_1+m_2+N+1)}$&$\displaystyle\frac{N+1}{(m_1+m_2)}$\\ \hline
0&3&$1$&$1$\\ \hline
1&1&$\displaystyle\frac{N+1}{24}\left\lceil\frac{m}{N+1}\right\rceil+\langle \tau_{0}(\omega)\rangle^1$&$\displaystyle\frac{m}{24}$\\ \hline
0&4& $\displaystyle\sum_{i=1}^4\left\lceil\frac{m_i}{N+1}\right\rceil+\left\langle \prod_{i=1}^4 \tau_0(\omega^{\overline{m_i}})\right\rangle^0$&$\displaystyle\frac{1}{N+1}\sum_{i=1}^4m_i$\\ \hline
1&2&$\displaystyle \frac{N+1}{24}\sum_{i=1}^2\left\lceil\frac{m_i}{N+1}\right\rceil\left\lceil\frac{m_i-1}{N+1}\right\rceil+
\left\lceil\frac{m_1}{N+1}\right\rceil\left\lceil\frac{m_2}{N+1}\right\rceil$&$\displaystyle\frac{m_1^2+m_2m_2+m_2^2}{24(N+1)}$
\\
&&$+\displaystyle\sum_{i\neq j}\left\langle\tau_0(\omega^{\overline{m_i-1}})\tau_1(\omega^{\overline{m_j}})\right\rangle^1\left\lceil\frac{m_i}{N+1}\right\rceil+\left\langle\prod_{i=1}^2\tau_0(\omega^{\overline{m_i}})\right\rangle^1$&\\
\hline\hline
\end{tabular} 
\end{spacing}
\end{center}
where $\overline{m}\equiv m+N\bmod{N+1}$ and $0\leq\overline{m}\leq N$.  Explicit formulae for the Gromov-Witten invariants are:
\begin{itemize}
\item{Genus zero 2-point invariants:
 \begin{align*}
\langle \tau_{m_1}(pt)\tau_{m_2}(pt)\rangle^{g=0} &=\frac{1}{c_{N+1}(m_1)c_{N+1}(m_2)} \cdot\frac{N+1}{(m_1+m_2+N+1)}
\end{align*}}
\item{Genus zero 3-point invariants: 
\begin{align*}
\langle \tau_{m_1}(pt)\tau_{m_2}(pt)\tau_{m_3}(pt)\rangle^{g=0} &=\frac{1}{c_{N+1}(m_1)c_{N+1}(m_2)c_{N+1}(m_3)} 
\end{align*}}
\item{Genus zero 4-point invariants:
\begin{align*}
\left\langle \prod_{i=1}^4 \tau_{m_i}(pt)\right\rangle^{g=0} &=\prod_{i=1}^4\frac{1}{c_{N+1}(m_i)}\cdot\left(\frac{1}{N+1}\sum_{i=1}^4 m_i+r(m_1,m_2,m_3,m_4) \right)
\end{align*}}
\item{Genus one 1-point invariants:
 \begin{equation*}
\langle \tau_{m}(pt)\rangle^1 =\frac{1}{c_{N+1}(m)}\cdot\left(\frac{m}{24}+r(m)\right)
\end{equation*}}
\item{Genus one 2-point invariants:
 \begin{equation*}
\langle \tau_{m_1}(pt)\tau_{m_2}(pt)\rangle^1 =\prod_{i=1}^2\frac{1}{c_{N+1}(m_i)}\cdot\left(\frac{m_1^2+m_2m_2+m_2^2}{24(N+1)}+\sum_{i=1}^2r_i(m_1,m_2)m_i+r(m_1,m_2)\right)
\end{equation*}}
\end{itemize}
where $r(\vec{m})$ and $r_j(\vec{m})$ depend only on the residue class of $m_i\bmod{N+1}$.

\end{document}